\newcommand\BibTeX{{\rmfamily B\kern-.05em \textsc{i\kern-.025em b}\kern-.08em
T\kern-.1667em\lower.7ex\hbox{E}\kern-.125emX}}
\DeclareMathOperator*{\argmin}{arg\,min}
\newtheorem{lemma}{Lemma}
\newtheorem{proposition}{Proposition}
\newtheorem{theorem}{Theorem}
\newtheorem{remark}{Remark}
\def\boxit#1{\vbox{\hrule\hbox{\vrule\kern6pt\vbox{\kern6pt#1\kern6pt}\kern6pt\vrule}\hrule}}
\begin{document}

\title{On the estimation of correlation in a binary sequence model}

\author{
Haolei Weng  \vspace{0.2cm} \\
Department of Statistics and Probability, \\
Michigan State University, East Lansing, MI 48824  \vspace{0.4cm} \\
Yang Feng  \vspace{0.2cm} \\
Department of Biostatistics\\
College of Global Public Health\\
New York University, New York, NY 10003
}

\date{}
\maketitle

\begin{abstract}
We consider a binary sequence generated by thresholding a hidden continuous sequence. The hidden variables are assumed to have a compound symmetry covariance structure with a single parameter characterizing the common correlation. We study the parameter estimation problem under such one-parameter models. We demonstrate that maximizing the likelihood function does not yield consistent estimates for the correlation. We then formally prove the nonestimability of the parameter by deriving a non-vanishing minimax lower bound. This counter-intuitive phenomenon provides an interesting insight that one-bit information of each latent variable is not sufficient to consistently recover their common correlation. On the other hand, we further show that trinary data generated from the hidden variables can consistently estimate the correlation with parametric convergence rate. Thus we reveal a phase transition phenomenon regarding the discretization of latent continuous variables  while preserving the estimability of the correlation. Numerical experiments are performed to validate the conclusions. 
\end{abstract}

\noindent {\bf Key Words: binary data; consistency; maximum likelihood estimate; minimax lower bound; phase transition; thresholding; trinary data}

\section{Introduction}

Data sets consisting of dependent binary outcomes are common in quantitative fields. For instance, in longitudinal studies, monthly presence or absence of a disease for residents in a neighborhood might be recorded over a certain period; in social network analyses, linkages between individuals or  organizations on a social media website can be accessible;  in financial econometrics, the rise or fall of stock prices from the same sectors are observed. The existence of dependence makes it subtle to analyze such types of data. Several sophisticated modeling frameworks for dependent binary data have been well developed over the last two decades, such as graphical models \citep{koller2009probabilistic} and exponential random graph models \citep{lusher2013exponential}. 

In this paper, however, we consider an alternative and more transparent modeling method. Specifically, let $A=(A_1,\ldots, A_n)$ be the binary observations. We assume the data is generated from thresholding a latent continuous vector $X=(X_1,\ldots. X_n)$ with $\mathbb{E}(X)=\mu$ and $\mbox{cov}(X)=\Sigma$:
\begin{align}\label{model:formula}
A_i=\mathbbm{1}_{X_i>\tau},~~i=1,\ldots,n,
\end{align}
where $\tau$ is a pre-defined threshold. The dependency of $A$  can then be modeled by specific structures  on $\Sigma$. The idea of thresholding continuous variables to obtain discrete ones has been widely adopted in binary and ordinal regression problems \citep{mccullagh1980regression, mccullagh1989generalized}. For example, in probit models, each binary response variable is obtained by truncating an independent latent normal variable. Nevertheless, different from regular regression problems,  the model formulation \eqref{model:formula} puts emphasis on the dependency structure of the observations.

We use network data example to elaborate on the model formulation \eqref{model:formula}. For notational convenience, we rewrite the observations $\{A_i\}_{i=1}^n$ by a matrix $\mathcal{A}=(\mathcal{A}_{ij})_{m \times m}$ and the latent continuous variables $\{X_i\}_{i=1}^n$ by $\mathcal{X}=(\mathcal{X}_{ij})_{m \times m}$. The binary entry $\mathcal{A}_{ij}$ represents whether there exists an edge between nodes $i$ and $j$. We consider undirected networks where $\mathcal{A}_{ij}=\mathcal{A}_{ji}$ for simplicity. The covariance matrix $\Sigma$ belongs to $\mathbb{R}^{m^2 \times m^2}$. Due to rich structures exhibited in different types of networked systems \citep{newman2003structure}, there has been an extensive literature on network modeling including Erd{\"o}s--R{\'e}nyi random graph model \citep{erdos1959renyi}, exponential random graph model \citep{robins2007introduction}, stochastic blockmodel \citep{holland1983stochastic}, and latent space model \citep{hoff2002latent}, among others. As an alternative, model \eqref{model:formula} assumes the edges between nodes are generated from some underlying continuous variables, and the covariance matrix $\Sigma$ captures the possible dependency among different edges. One specification is
\begin{eqnarray*}
&& \mathcal{A}_{ij}=\mathbbm{1}_{\mathcal{X}_{ij}>0}, ~~~i=1,\ldots,m, ~j=1,\ldots, m, \\
&&\mathcal{X}=(\mathcal{X}_{ij})_{m\times m} \sim N(\Theta, \Sigma).
\end{eqnarray*}
The mean parameter $\Theta \in \mathbb{R}^{m\times m}$ incorporates heterogeneity across different edges. It can be assumed of low-rank based on the hypothesis that the generation mechanism of edges is driven by a few node-specific factors.  This is in the same spirit of both stochastic block model and  latent space model. Letting $\Sigma_{(i,j),(k,l)}$ be the covariance between $\mathcal{X}_{ij}$ and $\mathcal{X}_{kl}$, a general structure can be imposed for $\Sigma$:
\begin{eqnarray} \label{network:model}
\Sigma_{(i,j),(k,l)}=
\begin{cases}
1 & \{i,j\}\cap \{k,l\}= \{i,j\}\cup \{k,l\}, \\
b^* &  \{i,j\}\cap \{k,l\}=\emptyset, \\
a^* &  \mbox{otherwise},
\end{cases}
\end{eqnarray}
where $\emptyset$ is the empty set. In this case, the dependencies among edges that share common nodes and those which do not are characterized by two different parameters. The covariance structure with $b^*=0$ has been considered in relational data modeling works \citep{warner1979new, gill2001statistical, westveld2011mixed}. Here \eqref{network:model} generalizes further to take into account dependencies between edges without common nodes.

Under the model setup \eqref{model:formula}, a fundamental question is regarding the estimation of parameters in $\Sigma$, which provide important dependency information for the binary observations $A$. Given that classical asymptotic results do not hold in the current case, a delicate study of the estimation problem is not only theoretically appealing, but helpful for practitioners to better model and analyze dependent binary data sets. To fix  idea,  we consider a compound symmetry covariance structure:
\begin{align}
\label{compund:symmetry}
\Sigma_{ii}=1, ~~\Sigma_{ij}=a^*, \quad i=1,\ldots, n, \quad j=1,\ldots, n,\quad i \neq j. 
\end{align}
The correlation parameter $a^* \in (0,1)$ characterizes the dependency in the binary sequence $\{A_i\}_{i=1}^n$. This covariance matrix \eqref{compund:symmetry} is a special case of the spiked covariance form proposed in \cite{johnstone2004sparse, johnstone2009consistency} for studying high-dimensional principal component analysis. A full understanding of this structure can be seen as a gateway for understanding more complicated ones. Note that \eqref{network:model} contains \eqref{compund:symmetry} as a subset. The central question we focus on is
\begin{center}
\emph{As $n \rightarrow \infty$, can $a^*$ be consistently estimated from the sequence $\{A_i\}_{i=1}^n$?}
\end{center}
Surprisingly, the answer turns out to be negative.  We present in details our theoretical discovery in Section \ref{main:conclusion}. In particular, we analyze the likelihood function and reveal its infeasibility of producing consistent estimates. We then formally prove the nonestimability of $a^*$ under the binary sequence model \eqref{model:formula}. Interestingly, we further demonstrate that $\sqrt{n}$-consistent estimates emerge under a variant of \eqref{model:formula}. Section \ref{numerical} presents numerical experiments to support our theoretical findings. Section \ref{discussion} contains a discussion of insightful implications and future research in light of the main results in the paper. To improve readability, we put all the technical materials in Section \ref{proof:part:one} and appendix. 

\section{On the estimation of  correlation}
\label{main:conclusion}

\subsection{Model formulation} \label{model:formal:form}

Throughout the paper, we study a wide range of binary sequence models specified in \eqref{model:formula} and \eqref{compund:symmetry}. Towards that goal, we consider an equivalent model formulation in the following,
\begin{align}\label{model:formula2}
&A_i=
\begin{cases}
1 & \mbox{~if~~}\sqrt{1-a^*}Y_i+\sqrt{a^*}Y>\tau, \\
0 & \mbox{~otherwise},
\end{cases}
\nonumber \\
&\mathbb{E}(Y_i)=\mathbb{E}(Y)=0,~~\mbox{var}(Y_i)=\mbox{var}(Y)=1, \quad i=1,\ldots, n,
\end{align}
where $Y_1,\ldots, Y_n$ are i.i.d. with probability density function $p(\cdot)$ and cumulative distribution function $G(\cdot)$, independent from $Y$ with probability density function $\gamma(\cdot)$. For simplicity, we assume both $p(\cdot)$ and $\gamma(\cdot)$ are supported on $\mathbb{R}$. The latent variable $Y$ can be considered as a random effect that induces dependency in the data. Thus formulated, different expressions of $p(\cdot)$ and $\gamma(\cdot)$ form various one-parameter models. Our analysis will focus on the models satisfying some of the following regularity conditions\footnote{The density functions $p(\cdot)$ and $\gamma(\cdot)$ are not uniquely defined over a zero-measure set. The conditions are only necessary for one specification of the densities. }: 
\begin{itemize}
\item[(\textbf{A})] $\underset{z\in \mathbb{R}}{\sup}~|\frac{d \gamma(z)}{dz}|<\infty$.
\item[(\textbf{B})] $\underset{{z\in\mathbb{R}} }{\sup}~|\frac{d^3\log G(z)}{dz^3}|<\infty,\quad \underset{z\in\mathbb{R}}{\sup}~ |\frac{d^3\log(1- G(z))}{dz^3}|<\infty$.
\item[\textbf{(C)}] $\gamma(\cdot)$ is continuous, and $\underset{{|z|\rightarrow \infty}}{\limsup} \frac{\gamma(b_1z+c_1)}{\gamma(b_2z+c_2)}<\infty, \forall~ 0<b_2<b_1<\infty, c_1,c_2\in \mathbb{R}$.
\end{itemize}
The above conditions are fairly mild and satisfied by a variety of continuous distributions. We present several examples below. 

\vspace{0.2cm}
\noindent Distributions satisfying Conditions \textbf{A} and \textbf{C} include
\vspace{0.1cm}
\begin{enumerate}
\item[(1)] Standard normal: $\gamma(z)=\frac{1}{\sqrt{2\pi}}e^{-\frac{z^2}{2}}$,
\item[(2)] Scaled $t$-distribution: $\gamma(z)=\frac{\Gamma[(\nu+1)/2]}{\sqrt{(\nu-2)\pi}\Gamma(\nu/2)}(1+\frac{z^2}{\nu-2})^{-\frac{\nu+1}{2}}~(\nu>2)$,
\item[(3)] Gumbel distribution:  $\gamma(z)=\frac{\pi}{\sqrt{6}} \exp(-\frac{\pi}{\sqrt{6}} z-\gamma_0)-e^{-\frac{\pi}{\sqrt{6}} z-\gamma_0})$, where $\gamma_0$ is the Euler's constant.
\end{enumerate}

\vspace{0.1cm}

\noindent Distributions satisfying Condition \textbf{B} include
\vspace{0.1cm}
\begin{enumerate}
\item[(1)] Standard normal: $p(z)=\frac{1}{\sqrt{2\pi}}e^{-\frac{z^2}{2}}$,
\item[(2)] Logistic distribution: $p(z)=\frac{\pi e^{-\frac{\pi z}{\sqrt{3}}}}{\sqrt{3}(1+e^{-{\pi z}/{\sqrt{3}}})^2}$,
\item[(3)] Laplace distribution: $p(z)=\frac{1}{\sqrt{2}}e^{-\sqrt{2}|z|}$.
\end{enumerate}

\subsection{Likelihood analysis} \label{likelihood}
Given model \eqref{model:formula2}, our goal is to estimate the single parameter $a^*\in (0,1)$. We first write down the likelihood function 
\begin{align}
\label{likelihood:form}
L_n(a;\{A_i\}) 
=\int_{-\infty}^{\infty}\Big[1-G\Big(\frac{\tau-\sqrt{a}z}{\sqrt{1-a}}\Big)\Big]^{\sum_{i=1}^nA_i} \Big[G\Big(\frac{\tau-\sqrt{a}z}{\sqrt{1-a}}\Big)\Big]^{n-\sum_{i=1}^nA_i}\gamma(z)dz. 
\end{align}
Intuitively speaking, if maximizing the likelihood provides a consistent estimate, we should expect that a properly normalized likelihood function  converges to a deterministic function whose maximum is achieved at $a=a^*$. However, the following result says this is not the case. 

\begin{proposition} \label{prop:one}
Under the model formulation \eqref{model:formula2} and Conditions \textbf{A} and \textbf{B}, for any given $a\in (0,1)$, with probability 1, as $n\rightarrow \infty$
\begin{align}\label{normal:likelihood}
&n^{-1}\log L_n(a;\{A_i\})  \nonumber \\
\rightarrow &\bigg(1-G\Big(\frac{\tau-\sqrt{a^*}Y}{\sqrt{1-a^*}}\Big)\bigg)\cdot \log \bigg(1-G\Big(\frac{\tau-\sqrt{a^*}Y}{\sqrt{1-a^*}}\Big)\bigg) + G\Big(\frac{\tau-\sqrt{a^*}Y}{\sqrt{1-a^*}}\Big)\cdot \log G\Big(\frac{\tau-\sqrt{a^*}Y}{\sqrt{1-a^*}}\Big).
\end{align}
\end{proposition}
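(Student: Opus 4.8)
The plan is to recognize the normalized log-likelihood as a Laplace-type integral and extract its exponential growth rate. First I would condition on the random effect $Y$. Given $Y$, the indicators $A_1,\ldots,A_n$ are i.i.d. Bernoulli trials with success probability $p^\ast := 1 - G\big(\frac{\tau-\sqrt{a^\ast}Y}{\sqrt{1-a^\ast}}\big)$, so the strong law of large numbers forces the sample proportion $\widehat p := n^{-1}\sum_{i=1}^n A_i$ to converge almost surely to $p^\ast$. This reduces all randomness in the problem to the single scalar convergence $\widehat p \to p^\ast$ (and in particular $\widehat p\in(0,1)$ for all large $n$, so the logarithms below are well defined).

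Next, writing $w_a(z) := \frac{\tau-\sqrt{a}z}{\sqrt{1-a}}$, I would express the likelihood in exponential form,
\begin{align*}
L_n(a;\{A_i\}) = \int_{-\infty}^{\infty} \exp\big(n\,\phi_n(a,z)\big)\,\gamma(z)\,dz,\qquad \phi_n(a,z) := \widehat p\,\log\big(1-G(w_a(z))\big) + (1-\widehat p)\,\log G(w_a(z)).
\end{align*}
The upper bound is immediate: since $\gamma$ integrates to $1$, we get $n^{-1}\log L_n(a)\le \sup_z \phi_n(a,z)$. The key elementary observation is that this supremum has a closed form. Because $G$ is a continuous, strictly increasing CDF supported on $\mathbb{R}$, the map $z\mapsto G(w_a(z))$ is onto $(0,1)$, so maximizing $\phi_n(a,\cdot)$ reduces to maximizing $u\mapsto \widehat p\log(1-u)+(1-\widehat p)\log u$ over $u\in(0,1)$. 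The maximizer is $u=1-\widehat p$, giving $\sup_z\phi_n(a,z)=\widehat p\log\widehat p+(1-\widehat p)\log(1-\widehat p)=:H(\widehat p)$. Crucially, this value does not depend on $a$, which is the algebraic heart of the nonestimability phenomenon; letting $n\to\infty$ and using continuity of $H$ yields $\limsup_n n^{-1}\log L_n(a)\le H(p^\ast)$.

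For the matching lower bound I would localize the integral near the maximizer. Let $z_a^\ast$ solve $G(w_a(z_a^\ast))=1-p^\ast$. Since $\widehat p\to p^\ast$ while $\log G(w_a(\cdot))$ and $\log(1-G(w_a(\cdot)))$ stay bounded on compact $z$-sets, the convergence $\phi_n(a,\cdot)\to\phi(a,\cdot)$ is uniform on compacts, where $\phi(a,\cdot)$ is the almost-sure pointwise limit; combined with continuity of $\phi(a,\cdot)$ this lets me, for any $\epsilon>0$, find $\delta>0$ and $N$ with $\phi_n(a,z)\ge H(p^\ast)-\epsilon$ for all $|z-z_a^\ast|\le\delta$ and $n\ge N$. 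Restricting the integral to this neighborhood and using that $\gamma$ is continuous (Condition \textbf{A}) and positive there gives $n^{-1}\log L_n(a)\ge H(p^\ast)-\epsilon + n^{-1}\log \mathbb{P}(Y\in[z_a^\ast-\delta,z_a^\ast+\delta])$; sending $n\to\infty$ and then $\epsilon\to 0$ gives $\liminf_n n^{-1}\log L_n(a)\ge H(p^\ast)$. Substituting $p^\ast=1-G\big(\frac{\tau-\sqrt{a^\ast}Y}{\sqrt{1-a^\ast}}\big)$ into $H(p^\ast)$ reproduces the claimed limit exactly.

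The main obstacle will be the rigorous Laplace argument rather than the algebra: one must confirm that $\phi_n(a,\cdot)$ attains its supremum at an interior point (it tends to $-\infty$ as $z\to\pm\infty$ because one of $G(w_a(z)),\,1-G(w_a(z))$ vanishes while both weights $\widehat p,1-\widehat p$ are bounded away from $0$), so the exponential rate is not diluted by tails, and that the localized lower bound loses only sub-exponential factors, which wash out after dividing by $n$. Here Condition \textbf{B} does the technical work — bounded third derivatives of $\log G$ and $\log(1-G)$ make $\phi_n(a,\cdot)$ smooth with a well-behaved interior maximizer and supply the uniform-on-compacts control — while Condition \textbf{A} secures the continuity and positivity of $\gamma$ used to produce a positive localized mass. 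I would not need the exact Gaussian constant from Laplace's method, since only the exponential order survives the $n^{-1}\log(\cdot)$ normalization.
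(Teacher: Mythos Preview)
Your proposal is correct and follows the same overall Laplace-method architecture as the paper: an upper bound via $\sup_z \phi_n(a,z)=H(\widehat p)$ (the paper writes this as $F_n(z^*)$ after a change of variables) and a lower bound by localizing the integral near the maximizer. The one genuine difference is in how the lower bound is obtained. The paper Taylor-expands $F_n$ to second order around its \emph{sample} maximizer $z^*=-c_a^{-1}G^{-1}(1-\bar A)$, uses Condition~\textbf{B} to get a uniform Lipschitz bound on $F_n''$ and Condition~\textbf{A} to control $\gamma$, and thereby produces an explicit Gaussian-type lower bound with an $O(n^{-1}\log n)$ correction. Your argument instead localizes around the \emph{limiting} maximizer $z_a^*$ and uses uniform-on-compacts convergence $\phi_n\to\phi$ together with positivity of $\int_{z_a^*-\delta}^{z_a^*+\delta}\gamma$; this is softer but entirely sufficient at the $n^{-1}\log$ scale. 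In fact your route essentially sidesteps Conditions~\textbf{A} and~\textbf{B}: the uniform convergence you need follows just from $\widehat p\to p^*$ and continuity of $\log G,\log(1-G)$ on compacts, and the positive localized mass follows from the standing assumption that $\gamma$ has full support, not from Condition~\textbf{A} per se (so your closing remarks slightly overstate the role of those conditions). The payoff of the paper's more quantitative lower bound is that it already contains the pieces needed for the second-order result in Proposition~\ref{higherorder:like}; your cleaner argument proves Proposition~\ref{prop:one} but would have to be redone from scratch for the next step.
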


 Proposition \ref{prop:one} is essentially a first-order Laplace approximation. Conditions \textbf{A} and \textbf{B} guarantee the accuracy of Taylor expansions used in the Laplace method. The result reveals that the normalized log-likelihood function does not converge to a deterministic function. More interestingly, the limiting function that it converges to is invariant of $a$ for $a\in (0,1)$. The left plot in Figure \ref{fig:one} shows one example of the normalized log-likelihood function curves (the dotted lines). We observe that although the log-likelihood functions vary from sample to sample, they are all flat over $(0,1)$. This is consistent with the result in Proposition \ref{prop:one}. It is also interesting to see that the functions have a sharp transition around $a=0$. This can be justified by a straightforward calculation that shows $n^{-1}\log L_n(0;\{A_i\})\rightarrow -\log2\approx -0.693$. Hence the limit at $a=0$ is deterministic and does not even depend on $a^*$. 
 
 It is possible to prove that the convergence in \eqref{normal:likelihood} continues to hold in expectation, under possibly additional conditions. To keep the technical material concise, we thus do not present a formal proof in the paper. However, we have numerically verified this result. The solid curve in the left plot of Figure \ref{fig:one} shows one specific example of normalized expected log-likelihood function, and the dashed line corresponds to the expectation of the limit in \eqref{normal:likelihood}. The two curves are both flat over $(0,1)$ and well aligned with each other. More numerical results are available in Section \ref{numerical}.

\begin{figure}[htb!]
\centering
\begin{tabular}{cc}
\hspace{-0.7cm} \includegraphics[width=8cm, height=8.5cm]{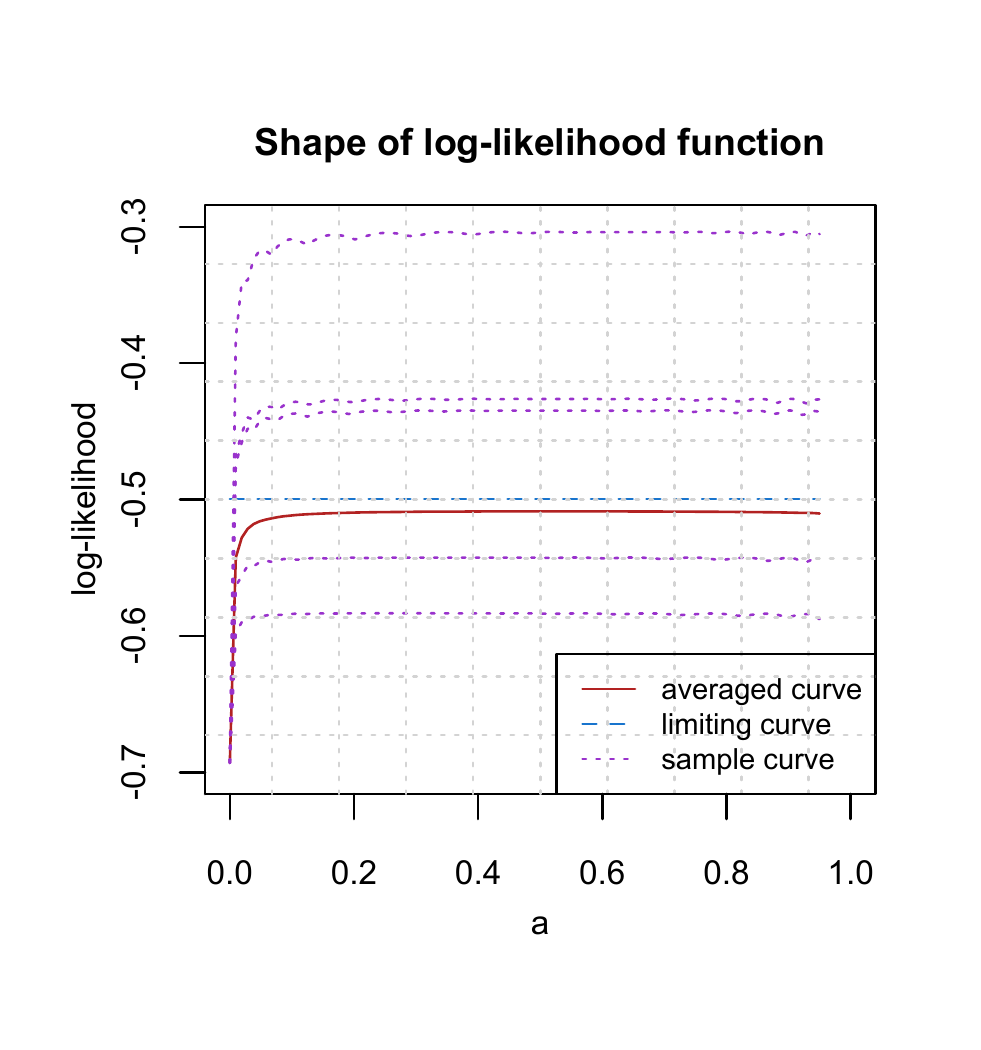} &\hspace{-0.7cm}
 \includegraphics[width=8cm, height=8.5cm]{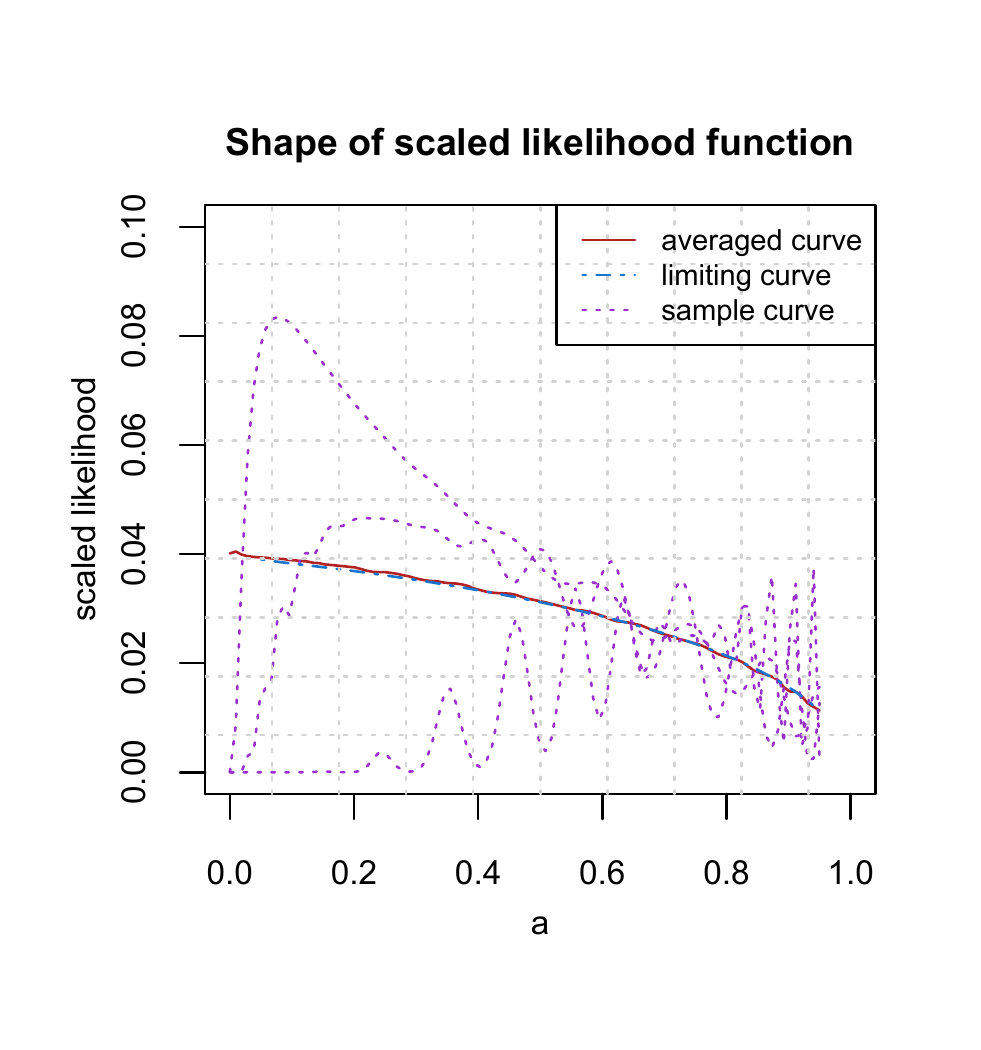} 
\end{tabular}
\vspace{-0.7cm}
\caption{Plots of normalized log-likelihood (left) and scaled likelihood (right) under model \eqref{model:formula2}, where $\alpha^*=0.5, p(z)=\gamma(z)=\frac{1}{\sqrt{2\pi}}e^{-\frac{z^2}{2}}, \tau=0, n=1000$; ``averaged curve" denotes the expected functions; ``limiting curve" is the expectation of the limit in \eqref{normal:likelihood} and \eqref{second:order:term}; ``sample curve" represents the functions computed from one sample.} \label{fig:one}
\end{figure}

While the normalized (expected) log-likelihood function converges (in first order) to an unfavorable object, it does not necessarily mean that likelihood-based estimates are not consistent, because the higher-order terms in the limiting function may contain useful information for the parameter of interest. Unfortunately, this is not true either, as demonstrated below by a second-order analysis of the likelihood function. The proof of the preceding and following propositions are relegated to the appendix.

\begin{proposition}\label{higherorder:like}

Under model formulation \eqref{model:formula2} and Conditions \textbf{A} and \textbf{B}, for any given $a\in (0,1)$, as $n\rightarrow \infty$,
\begin{align}
\label{second:order:term}
& L_n(a;\{A_i\})\cdot \exp\left(-n[\bar{A}\log\bar{A}+(1-\bar{A})\log(1-\bar{A})]\right) \nonumber \\
=&\frac{\sqrt{1-a}\gamma\Big(\sqrt{\frac{(1-a)a^*}{(1-a^*)a}}Y+\frac{\sqrt{1-a^*}-\sqrt{1-a}}{\sqrt{a(1-a^*)}}\tau\Big)}{\sqrt{a}}\cdot \sqrt{\frac{2\pi\left[1-G(\frac{\tau-\sqrt{a^*}Y}{\sqrt{1-a^*}})\right]\cdot G(\frac{\tau-\sqrt{a^*}Y}{\sqrt{1-a^*}})}{np^2(\frac{\tau-\sqrt{a^*}Y}{\sqrt{1-a^*}})}}+O_p(n^{-1}),
\end{align}
where $\bar{A}=\frac{1}{n}\sum_{i=1}^nA_i$.
\end{proposition}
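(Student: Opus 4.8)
The plan is to treat the integral \eqref{likelihood:form} as a Laplace-type integral, conditionally on the random effect $Y$, and to carry out a second-order Laplace expansion. Write $\bar A=n^{-1}\sum_{i=1}^n A_i$ and, for brevity, $h_a(z)=\frac{\tau-\sqrt a\,z}{\sqrt{1-a}}$, so that the integrand in \eqref{likelihood:form} equals $\exp\{n\phi_n(z)\}\gamma(z)$ with
$$\phi_n(z)=\bar A\,\log\big[1-G(h_a(z))\big]+(1-\bar A)\,\log G(h_a(z)).$$
Conditional on $Y$ the $A_i$ are i.i.d. Bernoulli with success probability $1-G(h_{a^*}(Y))$, where $h_{a^*}(Y)=\frac{\tau-\sqrt{a^*}Y}{\sqrt{1-a^*}}$; hence by the strong law $\bar A\to 1-G(h_{a^*}(Y))$ a.s., and by the conditional central limit theorem $\bar A-\big(1-G(h_{a^*}(Y))\big)=O_p(n^{-1/2})$. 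The role of the normalizing factor $\exp\{-n[\bar A\log\bar A+(1-\bar A)\log(1-\bar A)]\}$ will become transparent once the maximizer of $\phi_n$ is located.

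First I would locate and analyze the interior maximizer $z^\star$ of $\phi_n$. Differentiating, $\phi_n'(z)=\frac{\sqrt a}{\sqrt{1-a}}\,p(h_a(z))\big[\frac{\bar A}{1-G(h_a(z))}-\frac{1-\bar A}{G(h_a(z))}\big]$, so $\phi_n'(z^\star)=0$ is equivalent to $G(h_a(z^\star))=1-\bar A$, which has a finite solution since $p$ is supported on $\mathbb{R}$. Substituting back gives the key cancellation $\phi_n(z^\star)=\bar A\log\bar A+(1-\bar A)\log(1-\bar A)$, i.e. $\exp\{-n\phi_n(z^\star)\}$ is exactly the normalizer in the statement. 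Combining $G(h_a(z^\star))=1-\bar A$ with $\bar A\to 1-G(h_{a^*}(Y))$ identifies the limiting maximizer as $z^\star\to\sqrt{\frac{(1-a)a^*}{(1-a^*)a}}\,Y+\frac{\sqrt{1-a^*}-\sqrt{1-a}}{\sqrt{a(1-a^*)}}\,\tau$, precisely the argument of $\gamma$ in \eqref{second:order:term}. For the curvature, because the bracket in $\phi_n'$ vanishes at $z^\star$, the contribution from differentiating $p(h_a(\cdot))$ drops out, and a short computation using $1-G(h_a(z^\star))=\bar A$ yields $\phi_n''(z^\star)=-\frac{a}{1-a}\,\frac{p^2(h_a(z^\star))}{\bar A(1-\bar A)}<0$. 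The classical Laplace approximation $\int e^{n\phi_n}\gamma\,dz=e^{n\phi_n(z^\star)}\,\gamma(z^\star)\sqrt{\tfrac{2\pi}{n(-\phi_n''(z^\star))}}\,\big(1+o_p(1)\big)$ then gives, after multiplying by $\exp\{-n\phi_n(z^\star)\}$, the finite-sample leading term $\frac{\sqrt{1-a}}{\sqrt a}\,\frac{\gamma(z^\star)}{p(h_a(z^\star))}\sqrt{\tfrac{2\pi\,\bar A(1-\bar A)}{n}}$.

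The remaining work is to justify the expansion with the stated $O_p(n^{-1})$ accuracy and then replace the finite-sample quantities by their limits. Conditions \textbf{A} and \textbf{B} enter here: Condition \textbf{B} bounds the third derivatives of $\log G$ and $\log(1-G)$, so the cubic Taylor remainder of the phase $\phi_n$ about $z^\star$ is uniformly controlled, while Condition \textbf{A} bounds $\gamma'$, controlling the amplitude expansion. Together with a localization step—since $\phi_n\le 0$ with a unique strict maximum at $z^\star$ and $\gamma$ is integrable, the contribution of $\{|z-z^\star|>\delta\}$ is exponentially negligible relative to the $\Theta(n^{-1/2})e^{n\phi_n(z^\star)}$ main term—these yield the Laplace expansion with relative error $O_p(n^{-1/2})$, hence additive error $O_p(n^{-1})$ on the $\Theta(n^{-1/2})$ prefactor. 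Finally, the leading term is a smooth bounded function of $\bar A$ times $n^{-1/2}$; since $\bar A$ deviates from its limit $1-G(h_{a^*}(Y))$ by $O_p(n^{-1/2})$, replacing $\bar A$, $z^\star$ and $p(h_a(z^\star))$ by $1-G(h_{a^*}(Y))$, the limiting maximizer, and $p(h_{a^*}(Y))$ perturbs the term by $O_p(n^{-1/2})\cdot n^{-1/2}=O_p(n^{-1})$, which produces exactly \eqref{second:order:term}.

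The main obstacle I anticipate is the uniform remainder control together with the bookkeeping of the two error sources. Because both the phase $\phi_n$ and the maximizer $z^\star$ depend on $n$ through $\bar A$, one must verify that the Laplace constants $-\phi_n''(z^\star)$, $\gamma(z^\star)$ and $p(h_a(z^\star))$ remain bounded and bounded away from degeneracy with high probability, so that the expansion holds uniformly in the realized $\bar A$, and then confirm that the dominant $O_p(n^{-1})$ contribution comes from the $n^{-1/2}$ fluctuation of $\bar A$ propagating through the $\Theta(n^{-1/2})$ prefactor rather than from a genuinely larger Laplace error. This is precisely where Conditions \textbf{A} and \textbf{B} are indispensable.
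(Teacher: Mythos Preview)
Your plan is correct and follows essentially the same Laplace-approximation strategy as the paper: locate the unique maximizer $z^\star$ of the phase, identify $\phi_n(z^\star)$ with the normalizer, compute the curvature $-\phi_n''(z^\star)=\frac{a}{1-a}\,p^2/[\bar A(1-\bar A)]$, control the tail and remainder via Conditions \textbf{A} and \textbf{B}, and finally replace $\bar A$ by its $O_p(n^{-1/2})$ limit to absorb the perturbation into the $O_p(n^{-1})$ error. The paper's implementation differs only in mechanics: it first shifts variables to write $L_n=\int e^{nF_n(z)}\gamma(z+\tau/\sqrt a)\,dz$, establishes quasi-concavity of $F_n$ to justify localization, and then carries out the Laplace step by the monotone substitution $\tilde z=F_n(z^*)-F_n(z)$ (a Watson's-lemma device) together with the identity $g(\tilde z)=h(0)\tilde z^{-1/2}+\tilde z^{1/2}\int_0^1 h'(t\tilde z)\,dt$, which makes the $O_p(n^{-1})$ remainder bound fully explicit; your sketch invokes the same ingredients but at a higher level.
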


Proposition \ref{higherorder:like} can be considered as a second-order Laplace approximation result. Conditions \textbf{A} and \textbf{B} are used to justify the Taylor expansions in the analysis. 
We have shown in Proposition \ref{prop:one} that the first-order term of the likelihood function $L_n(a;\{A_i\})$ is exponentially small. We push the analysis one step further in Proposition \ref{higherorder:like} to reveal that the second-order dominating term in $L_n(a;\{A_i\})$ is of order $n^{-1/2}$, and derive the precise constant of the second-order term. 

In contrast to the first-order term in Proposition \ref{prop:one}, the second-order term does depend on $a$. However, this term is a random function of $a$, and in general does not attain maximum at the true parameter $a=a^*$. For example, in the case where $\gamma(z)=p(z)=\frac{1}{\sqrt{2\pi}}e^{-\frac{z^2}{2}}$, the maximizer of this term is $a=\frac{a^*Y^2}{a^*Y^2+1-a^*}$. As in Proposition \ref{prop:one}, the convergence in \eqref{second:order:term} can be proved to hold in expectation under more regularity conditions. We skip this part for simplicity. 

The right plot of Figure \ref{fig:one} illustrates the result in Proposition \ref{higherorder:like}. We see that the scaled likelihood functions (dotted curves) depend on $a$ in a nonlinear way, but they are not able to identify the truth parameter $a^*$ as the (approximate) maximizer. Moreover, the expected likelihood (solid curve) is well matched with the dashed curve, the expectation of the limit  in \eqref{higherorder:like}. We provide more numerical examples in Section \ref{numerical}.

\subsection{Nonestimability of $a^*$}\label{estimability}

The analysis of the likelihood function in Section \ref{likelihood} indicates that likelihood-based methods may not yield consistent estimates for $a^*$. 
Given the optimality of maximum likelihood estimation under parametric models, it may further imply that no consistent estimates ever exist. Indeed, this is formally established in the theorem below.

\begin{theorem}\label{minimax:lowerb}
Under model \eqref{model:formula2} and Condition \textbf{C}, given any strictly increasing function $R(\cdot): [0,\infty) \rightarrow [0,\infty)$ with $R(0)=0$ and any $0<a_1<a_2<1$,
\begin{eqnarray*}
\liminf_{n\rightarrow \infty} \inf_{T}\max_{a^*\in \{a_1,a_2\}}E_{a^*}[R(|T(\{A_i\}_{i=1}^n)-a^*|)]>0,
\end{eqnarray*}
where $T$ is any measurable function and the expectation $E_{a^*}$ is taken over $\{A_i\}_{i=1}^n$.
\end{theorem}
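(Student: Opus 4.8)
The plan is to prove the lower bound by Le Cam's two-point method, reducing nonestimability to the statement that the laws of the observed sequence under $a_1$ and under $a_2$ cannot be separated, uniformly in $n$. Writing $P^{(n)}_{a}$ for the law of $(A_1,\dots,A_n)$ under parameter $a$, the standard two-point inequality gives, for every measurable $T$,
\begin{equation*}
\max_{a^*\in\{a_1,a_2\}} E_{a^*}\big[R(|T-a^*|)\big] \;\ge\; R\Big(\tfrac{a_2-a_1}{2}\Big)\cdot \tfrac{1}{2}\Big(1-\big\|P^{(n)}_{a_1}-P^{(n)}_{a_2}\big\|_{\mathrm{TV}}\Big).
\end{equation*}
The separation factor $R(\tfrac{a_2-a_1}{2})$ is strictly positive because $R$ is strictly increasing with $R(0)=0$ and $a_1<a_2$. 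Hence it suffices to produce a constant $\kappa>0$, independent of $n$, with $\big\|P^{(n)}_{a_1}-P^{(n)}_{a_2}\big\|_{\mathrm{TV}}\le 1-\kappa$ for all $n$; the desired $\liminf$ is then at least $\tfrac{\kappa}{2}R(\tfrac{a_2-a_1}{2})>0$.

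The key structural observation is that, conditional on the single random effect $Y$, the coordinates $A_1,\dots,A_n$ are i.i.d.\ Bernoulli with success probability $1-G(V_a)$, where $V_a=\frac{\tau-\sqrt{a}\,Y}{\sqrt{1-a}}$. Consequently $P^{(n)}_{a}$ is a mixture of product-Bernoulli laws,
\begin{equation*}
P^{(n)}_{a}=\int_{\mathbb{R}} \mathrm{Bern}\big(1-G(v)\big)^{\otimes n}\, f_a(v)\,dv,\qquad f_a(v)=\frac{\sqrt{1-a}}{\sqrt{a}}\,\gamma\Big(\frac{\tau-\sqrt{1-a}\,v}{\sqrt{a}}\Big),
\end{equation*}
where $f_a$ is the density of $V_a$. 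The crucial point is that the conditional law of the whole sequence depends on $a$ only through the scalar $V_a$: once $V_a=v$ is fixed, the data distribution $\mathrm{Bern}(1-G(v))^{\otimes n}$ carries no trace of $a$. This encodes the intuition that a single common random effect leaves irreducible ambiguity no matter how large $n$ is. To exploit it, set $m(v)=\min\{f_{a_1}(v),f_{a_2}(v)\}$ and $\kappa=\int_{\mathbb{R}}m(v)\,dv$, and define the sub-probability measure $Q^{(n)}(\cdot)=\int \mathrm{Bern}(1-G(v))^{\otimes n}(\cdot)\,m(v)\,dv$ of total mass $\kappa$. Since $f_{a_j}\ge m$, we have $P^{(n)}_{a_j}\ge Q^{(n)}$ setwise for $j=1,2$, whence the overlap $\int\min\{dP^{(n)}_{a_1},dP^{(n)}_{a_2}\}\ge\kappa$ and therefore $\big\|P^{(n)}_{a_1}-P^{(n)}_{a_2}\big\|_{\mathrm{TV}}\le 1-\kappa$, uniformly in $n$.

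It remains to show $\kappa>0$, which is the heart of the argument and the only place Condition \textbf{C} enters. Both $f_{a_1}$ and $f_{a_2}$ are obtained from the continuous density $\gamma$ by an affine change of variable, so each is continuous and positive on $\mathbb{R}$; on any fixed bounded interval their minimum is bounded below by a positive constant, which already gives $\kappa>0$. The role of the tail-ratio hypothesis in Condition \textbf{C} is to make this robust: writing $b_j=\sqrt{1-a_j}/\sqrt{a_j}$ with $b_1>b_2$ (since $a_1<a_2$), the ratio $f_{a_1}(v)/f_{a_2}(v)$ is, up to a multiplicative constant and the substitution $z=-v$, exactly of the form $\gamma(b_1 z+c_1)/\gamma(b_2 z+c_2)$ with $c_j=\tau/\sqrt{a_j}$, so that $\limsup_{|v|\to\infty} f_{a_1}(v)/f_{a_2}(v)<\infty$. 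This guarantees that $f_{a_1}$ and $f_{a_2}$ are mutually absolutely continuous with comparable tails, so the overlap mass $\kappa$ is genuinely bounded away from $0$ rather than being an artifact of a bounded window.

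I expect the main obstacle to be precisely this quantitative control of the overlap in the tails: verifying that Condition \textbf{C} indeed forces $\kappa$ away from $0$, and handling the boundary regimes $v\to\pm\infty$ (equivalently $1-G(V_a)\to 0,1$) carefully, so that neither mixing density escapes to a region the other cannot reach. By contrast, the Le Cam reduction, the conditional-i.i.d.\ mixture identity, and the setwise domination $P^{(n)}_{a_j}\ge Q^{(n)}$ are routine. Once $\kappa>0$ is secured, combining the displays yields $\liminf_{n\to\infty}\inf_{T}\max_{a^*\in\{a_1,a_2\}}E_{a^*}[R(|T-a^*|)]\ge\tfrac{\kappa}{2}R(\tfrac{a_2-a_1}{2})>0$, as claimed.
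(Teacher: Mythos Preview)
Your argument is correct and rests on the same structural observation as the paper's proof: after passing to the latent scalar $V_a=(\tau-\sqrt{a}\,Y)/\sqrt{1-a}$, the conditional law of the whole binary sequence is $\mathrm{Bern}(1-G(v))^{\otimes n}$, which is free of $a$, so the two experiments differ only through the mixing densities $f_{a_1},f_{a_2}$ of $V_a$. The paper exploits this by bounding the Kullback--Leibler divergence: after the same change of variable it shows the likelihood ratio $\mathrm{pr}_{a_1}(A)/\mathrm{pr}_{a_2}(A)$ is bounded by a constant (using the tail-ratio part of Condition~\textbf{C} to get a global pointwise bound $\gamma(c_{a_1}^{-1}z+\tau/\sqrt{a_1})\le \ell\,\gamma(c_{a_2}^{-1}z+\tau/\sqrt{a_2})$), hence $D_{KL}(\mathrm{pr}_{a_1}\Vert\mathrm{pr}_{a_2})\le \log(\ell\,c_{a_2}/c_{a_1})$ uniformly in $n$, and then invokes Tsybakov's Lemma~2.6 to convert this into a lower bound on the testing error. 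You instead bound the total variation directly via the overlap $\kappa=\int\min(f_{a_1},f_{a_2})$, using that both $P^{(n)}_{a_j}$ dominate the common sub-probability mixture $Q^{(n)}$.

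Your route is a little more elementary (no appeal to an external lemma relating KL to affinity) and, as you essentially note, it actually requires less of Condition~\textbf{C}: continuity of $\gamma$ together with the standing assumption that $\gamma$ has full support already yields an open interval on which both $f_{a_1}$ and $f_{a_2}$ are strictly positive, so $\kappa>0$ without ever invoking the tail-ratio hypothesis. The paper's KL argument, by contrast, genuinely uses the tail-ratio bound to control the likelihood ratio globally. Thus the ``main obstacle'' you flag is not an obstacle at all: once you have the mixture representation, $\kappa>0$ is immediate, and the rest of your proof is complete as written.
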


The proof is presented in Section \ref{proof:thm1}.

\begin{remark} 
\label{remark:one}
 Theorem \ref{minimax:lowerb} reveals that no consistent estimates of $a^*$ exist even when the parameter space only consists of two distinct elements. At first glance, this result seems counter-intuitive. The model \eqref{model:formula2} has a simple structure with a single parameter, but there is no way to reliably estimate the parameter from infinite components of the data sequence. On the other hand, suppose instead of the thresholded sequence $(A_1,\ldots, A_n)$, we observe the hidden sequence $(X_1,\ldots, X_n)$ where $X_i=\sqrt{1-a^*}Y_i+\sqrt{a^*}Y$. Then $\sqrt{n}$-consistent estimates can be readily available. For instance, denote $Z_i=X_{2i-1}-X_{2i},  i=1,\ldots,  \lfloor n/2\rfloor$. Then it is clear that $Z_1,\ldots, Z_{n/2}$ are independently and identically distributed with $\mathbb{E}Z_1=0, \mbox{var}(Z_1)=2(1-a^*)$. Hence $1-n^{-1}\sum_{i=1}^{n/2}Z^2_i$ is $\sqrt{n}$-consistent for $a^*$. 
 \end{remark}

\begin{remark}
 One might argue that it is the strong dependency in $\{A_i\}_{i=1}^n$ causing the issue. If the dependency can be somehow weakened, will $a^*$ become estimable? For example, we can consider $a^*=\rho_na^*_0$ with $a^*_0$ being a constant and $\rho_n\rightarrow 0$, and ask if there is any estimate $\hat{a}_n$ such that $\hat{a}_n-a^*=o(\rho_n)$. With a similar proof as the one for Theorem \ref{minimax:lowerb}, it is possible to show
 \[
\liminf_{n\rightarrow \infty} \inf_{T}\max_{a_0^*\in \{a_1,a_2\}}E_{a^*}(\rho^{-1}_n|T(\{A_i\}_{i=1}^n)-a^*|)>0. 
\]
Hence $a^*$ remains nonestimable. 
\end{remark}

\subsection{From binary to trinary data} \label{solution}

The discussion in Remark \ref{remark:one} from Section \ref{estimability} provides an intriguing explanation for the nonestimability of $a^*$ under model \eqref{model:formula2}. Specifically, $a^*$ is estimable from $(X_1,\ldots, X_n)$, but becomes nonestimable from the thresholded sequence $(A_1,\ldots, A_n)$. It is the thresholding operation in the binary data generating process that results in loss of too much information to consistently estimate $a^*$. In a nutshell, one-bit information of each $X_i$ is not sufficient to recover the common correlation among them. Then a natural question arises as how much information of each $X_i$ suffices to estimate $a^*$. We give a precise answer to the question in this section. Interestingly, we will show that in fact a little more than one bit of information is sufficient to obtain $\sqrt{n}$-consistent estimates.

Given two constants $-\infty<\tau_1 < \tau_2<+\infty$, let $\mathcal{I}_1=(-\infty,\tau_1], \mathcal{I}_2=(\tau_1,\tau_2], \mathcal{I}_3=(\tau_2,\infty)$ be the three adjoining intervals separated by the two break points. Suppose we observe the trinary sequence $(A_1,\ldots, A_n)$ where for each $i=1,\ldots,n,$
\begin{align}
\label{trinary:model}
&A_i=(A^{(1)}_i, A^{(2)}_i, A^{(3)}_i), ~A^{(j)}_i={1}\{X_i \in \mathcal{I}_j\}, ~j=1,2,3,  \nonumber \\
&X_i=\sqrt{1-a^*}Y_i+\sqrt{a^*}Y,
\end{align}
in which the $Y_i$'s and $Y$ are the same as in model \eqref{model:formula2}. Note that when $\tau_1=\tau_2=\tau$, model \eqref{trinary:model} is reduced to the binary sequence \eqref{model:formula2}. The theorem below gives one $\sqrt{n}$-consistent estimate for $a^*$ under the trinary sequence model \eqref{trinary:model}.

\begin{theorem}\label{three:thm}
Denote
\[
\bar{A}^{(j)}=n^{-1}\sum_{i=1}^nA_i^{(j)}, \quad j=1,2, 3.
\]
For any given $\tau_1 <\tau_2$, consider the estimate 
\begin{eqnarray}\label{moment:estimate}
\hat{a}_n=1-\Big[\frac{\tau_1-\tau_2}{G^{-1}(\bar{A}^{(1)})-G^{-1}(1-\bar{A}^{(3)})}\Big]^2\cdot \mathbbm{1}_{\bar{A}^{(1)}>0, \bar{A}^{(2)}>0,\bar{A}^{(3)}>0},
\end{eqnarray}
where $G^{-1}(\cdot)$ is the inverse function of the cumulative distribution function $G(\cdot)$. Under model \eqref{trinary:model}, it holds that
\[
\sqrt{n}(\hat{a}_n-a^*)=O_p(1), \quad \mbox{~~as~}n\rightarrow \infty.
\]
\end{theorem}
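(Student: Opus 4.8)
The plan is to exploit the conditional independence structure of the trinary data given the random effect $Y$, together with an exact cancellation that makes the population target free of $Y$. First I would condition on $Y=y$. Then $X_1,\dots,X_n$ are i.i.d., so the cell indicators $(A_i^{(1)},A_i^{(2)},A_i^{(3)})$, $i=1,\dots,n$, are i.i.d. multinomial trials with cell probabilities
\[
\pi_1(y)=G\Big(\tfrac{\tau_1-\sqrt{a^*}y}{\sqrt{1-a^*}}\Big),\quad
\pi_3(y)=1-G\Big(\tfrac{\tau_2-\sqrt{a^*}y}{\sqrt{1-a^*}}\Big),\quad
\pi_2(y)=1-\pi_1(y)-\pi_3(y).
\]
The crucial observation is the identity $G^{-1}(\pi_1(y))-G^{-1}(1-\pi_3(y))=\frac{\tau_1-\tau_2}{\sqrt{1-a^*}}$, which is free of $y$; consequently the map $H(u,v)=1-\big(\frac{\tau_1-\tau_2}{G^{-1}(u)-G^{-1}(1-v)}\big)^2$ satisfies $H(\pi_1(y),\pi_3(y))=a^*$ for every $y$. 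Thus, although $\bar A^{(1)}$ and $\bar A^{(3)}$ have $y$-dependent conditional limits, the particular combination entering $\hat a_n=H(\bar A^{(1)},\bar A^{(3)})$ always targets $a^*$.

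Second, on the event where the indicator equals $1$ I would apply the delta method conditionally on $Y=y$. The multinomial central limit theorem gives that $\sqrt n(\bar A^{(1)}-\pi_1(y))$ and $\sqrt n(\bar A^{(3)}-\pi_3(y))$ are jointly $O_p(1)$ with a covariance determined by $\pi_1(y),\pi_3(y)$. Since $p=G'$ is continuous and strictly positive on $\mathbb R$, the map $G^{-1}$ is continuously differentiable at $\pi_1(y)$ and at $1-\pi_3(y)$, and the denominator $G^{-1}(\pi_1(y))-G^{-1}(1-\pi_3(y))=(\tau_1-\tau_2)/\sqrt{1-a^*}$ is a fixed nonzero constant, so $H$ is continuously differentiable in a neighborhood of $(\pi_1(y),\pi_3(y))$. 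A first-order Taylor expansion of $H$ about $(\pi_1(y),\pi_3(y))$, using the identity above to kill the zeroth-order term, then shows that $\sqrt n(\hat a_n-a^*)$ equals a bounded linear combination of $\sqrt n(\bar A^{(1)}-\pi_1(y))$ and $\sqrt n(\bar A^{(3)}-\pi_3(y))$ plus $o_p(1)$, hence is $O_p(1)$ conditionally on each $y$.

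Third, I would upgrade this conditional statement to the unconditional $O_p(1)$ claim, which I expect to be the main obstacle. The difficulty is twofold: the conditional asymptotic variance involves $1/p\big((\tau_j-\sqrt{a^*}y)/\sqrt{1-a^*}\big)$ and blows up as $|y|\to\infty$, so it cannot be bounded uniformly; and on the complementary event where some $\bar A^{(j)}=0$ the estimator is reset to $\hat a_n=1$, producing a deviation of order $\sqrt n$. I would handle both by truncation. Writing $P(\sqrt n|\hat a_n-a^*|>M)$ as the sum of its restrictions to $\{|Y|>K\}$, to $\{\text{indicator}=0\}$, and to $\{|Y|\le K,\ \text{indicator}=1\}$, the first term is made small by choosing $K$ large; the second is at most $E_Y[(1-\pi_1(Y))^n+(1-\pi_2(Y))^n+(1-\pi_3(Y))^n]\to 0$ by dominated convergence; and on the compact set $\{|Y|\le K\}$ all cell probabilities lie in some $[\delta,1-\delta]$, so a local Lipschitz bound for $H$ with constant uniform over $|y|\le K$, combined with Chebyshev's inequality applied to $\sqrt n(\bar A^{(j)}-\pi_j(y))$, gives control of order $C(K)/M^2$ uniform in $y$, made small by taking $M$ then $n$ large. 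Assembling the three pieces yields $\sqrt n(\hat a_n-a^*)=O_p(1)$.

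The conceptual heart is the $y$-free identity for $G^{-1}(\pi_1)-G^{-1}(1-\pi_3)$, which is a short computation; the technical heart, and the step I expect to require the most care, is the uniform-on-compacts control together with the bookkeeping for the truncation and the reset event, precisely because the conditional variance is genuinely unbounded over the full range of the random effect $Y$.
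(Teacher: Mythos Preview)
Your proposal is correct and shares with the paper both the pivotal identity $H(\pi_1(Y),\pi_3(Y))=a^*$ and the first-order Taylor expansion of $H$. The difference lies in how the random effect is handled. You condition on $Y=y$, invoke the multinomial CLT/Chebyshev (whose variance depends on $y$), and then decondition via a truncation $\{|Y|\le K\}$ to absorb the non-uniformity you correctly flag. The paper instead keeps $Y$ random throughout: it applies Hoeffding's inequality, whose tail bound $2e^{-2nt^2}$ is free of the Bernoulli parameter, to obtain $\bar A^{(j)}-\pi_j(Y)=O_p(n^{-1/2})$ unconditionally in one line; and it controls the gradient by observing that $\sup_{(u,v)\in\Delta}|\nabla H(u,v)|$, taken over a \emph{random} compact neighborhood $\Delta\subset\mathcal B$ determined by $Y$, is a finite measurable function of $Y$ and hence automatically $O_p(1)$. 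The reset event is then dispatched by the same concentration, since $(\bar A^{(1)},\bar A^{(3)})\in\Delta$ with probability tending to one. This device bypasses the three-way truncation entirely and is considerably shorter; your route, on the other hand, is more explicit about where the unbounded conditional variance is absorbed and would be the natural starting point if one wanted quantitative tail bounds on $\sqrt{n}(\hat a_n-a^*)$ rather than just $O_p(1)$.
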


The proof of Theorem \ref{three:thm} can be found in Section \ref{thm2:proof:add}.

\begin{remark}
Following the preceding discussion, we can further consider a general sequence model $(A_1,\ldots, A_n)$ where there exist $S$ ($S \geq 3$) consecutive intervals $\{ \mathcal{I}_j \}_{j=1}^S$ and $A_i$ represents which interval $X_i$ falls into. It is not hard to show that $\sqrt{n}$-consistent estimate for $a^*$ can be constructed in a similar way as in \eqref{moment:estimate}. At a high level, we may consider the observed sequence $\{A_i\}_{i=1}^n$ as a  discretized version of the latent continuous sequence $\{X_i\}_{i=1}^n$. Theorems \ref{minimax:lowerb} and \ref{three:thm} together characterize a phase transition regarding the estimability of $a^*$. That is, $a^*$ is estimable if and only if $S \geq 3$.  
\end{remark}

\section{Numerical experiments}\label{numerical}

\subsection{Simulations}

In this section, we provide some simulation studies to validate the theoretical results presented in Sections \ref{likelihood}, \ref{estimability}, and \ref{solution}. We consider the following three cases\footnote{The parameters in the distributions are chosen to make them mean zero and variance one.}:
\begin{itemize}
\item[(1)] Standard normal: $p(z)=\frac{1}{\sqrt{2\pi}}e^{-\frac{z^2}{2}}$.~~Standard normal: $\gamma(z)=\frac{1}{\sqrt{2\pi}}e^{-\frac{z^2}{2}}$.
\item[(2)] Logistic distribution: $p(z)=\frac{\pi e^{-\frac{\pi z}{\sqrt{3}}}}{\sqrt{3}(1+e^{-\frac{\pi z}{\sqrt{3}}})^2}$.~~Standard normal: $\gamma(z)=\frac{1}{\sqrt{2\pi}}e^{-\frac{z^2}{2}}$.
\item[(3)] Laplace distribution: $p(z)=\frac{1}{\sqrt{2}}e^{-\sqrt{2}|z|}$.~~Scaled $t$-distribution: $\gamma(z)=\frac{\Gamma(3)}{\sqrt{3\pi}\Gamma(2.5)}(1+\frac{z^2}{3})^{-3}$.
\end{itemize}
Throughout the simulations, we set $a^*=0.5$, $\tau=0$ in the binary sequence model \eqref{model:formula2} and $a^*=0.5$, $\tau_1=-1$, $\tau_2=1$ in the trinary sequence model \eqref{trinary:model}. 

For each case, we plot the normalized log-likelihood and scaled likelihood under model \eqref{model:formula2} with $n=1000$, and log-likelihood under model \eqref{trinary:model} with $n=500$. The expectations of the aforementioned functions are included as well. The results are shown in Figure \ref{fig:two}. Let us use Case 1 (first row) as an example to discuss the details. Similar phenomena are observed in the other two cases. As seen in the first plot, the normalized log-likelihood functions (dotted lines) are flat over $(0,1)$. Moreover, the expected log-likelihood (solid line) is also flat over $(0,1)$ and well approximated by the dashed line, the expectation of the limit in \eqref{normal:likelihood}. These outcomes are accurately predicted by Proposition \ref{prop:one}. The second plot confirms the results in Proposition \ref{higherorder:like}: the rescaled likelihoods (dotted lines), as a function of $a$, vary much from sample to sample; and they can not identify the true parameter $a^*=0.5$ as the (approximate) maximizer. In addition, the expected function (solid line) is close to the expectation of the limit derived in \eqref{second:order:term}. Finally, in the third plot, it is clear that the (expected) log-likelihood function becomes informative for the estimation of $a^*$ under the trinary sequence model. This is consistent with our discussions in Section \ref{solution}.

To evaluate the estimator $\hat{a}_n$ in Theorem \ref{three:thm}, for each of the three cases, we compute the averaged absolute error $|\hat{a}_n-a^*|$ over $5000$ repetitions, with sample sizes varying from $1000$ to $3000$. Figure \ref{fig:three} shows the error vs. sample size plot on the logarithmic scale. We observe that all three curves are nearly linear. Using least squares, the estimated slopes of them are $-0.55, -0.52, -0.52$, respectively. These numerical results verified that $\hat{a}_n$ is a $\sqrt{n}$-consistent estimator.

\begin{figure}[htb!]
\centering
\begin{tabular}{ccc}
\hspace{-.8cm}\includegraphics[width=5.7cm, height=6.2cm]{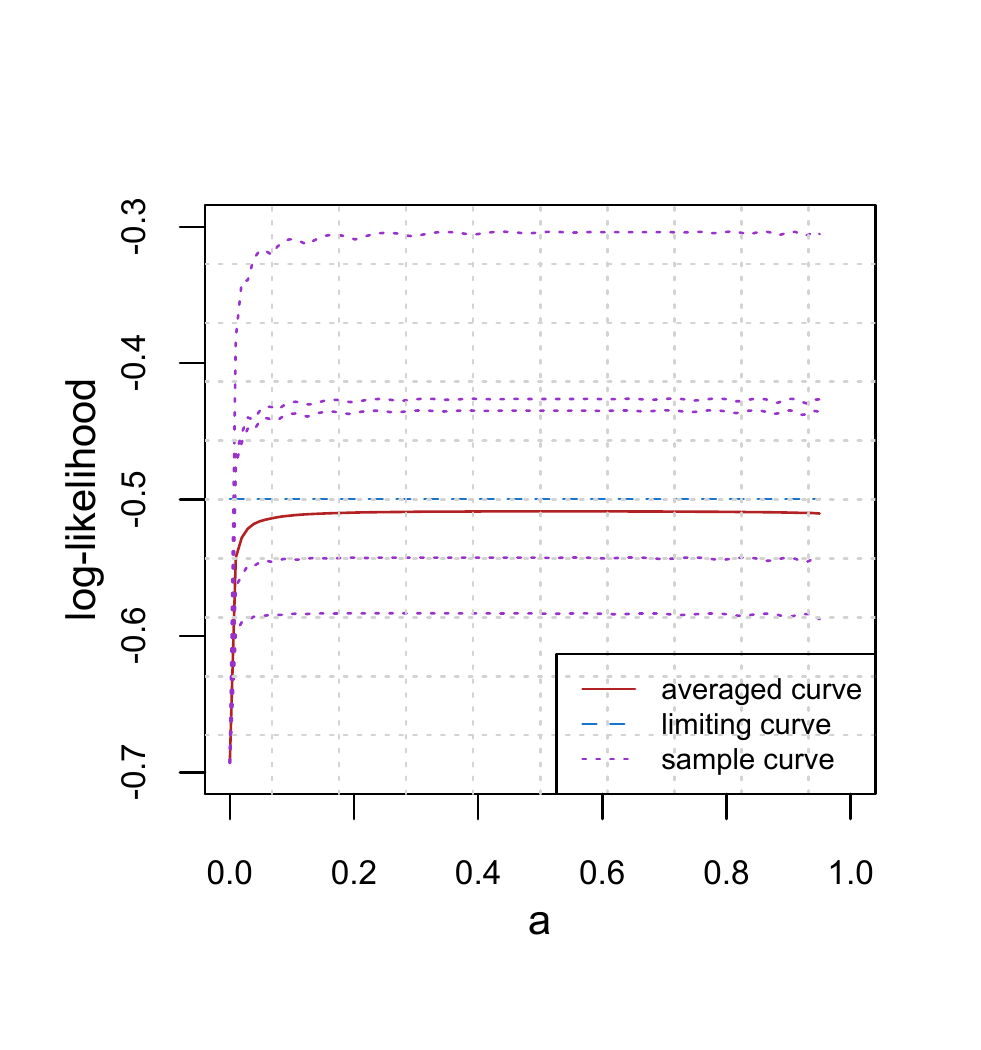} &\hspace{-1.3cm}
 \includegraphics[width=5.7cm, height=6.2cm]{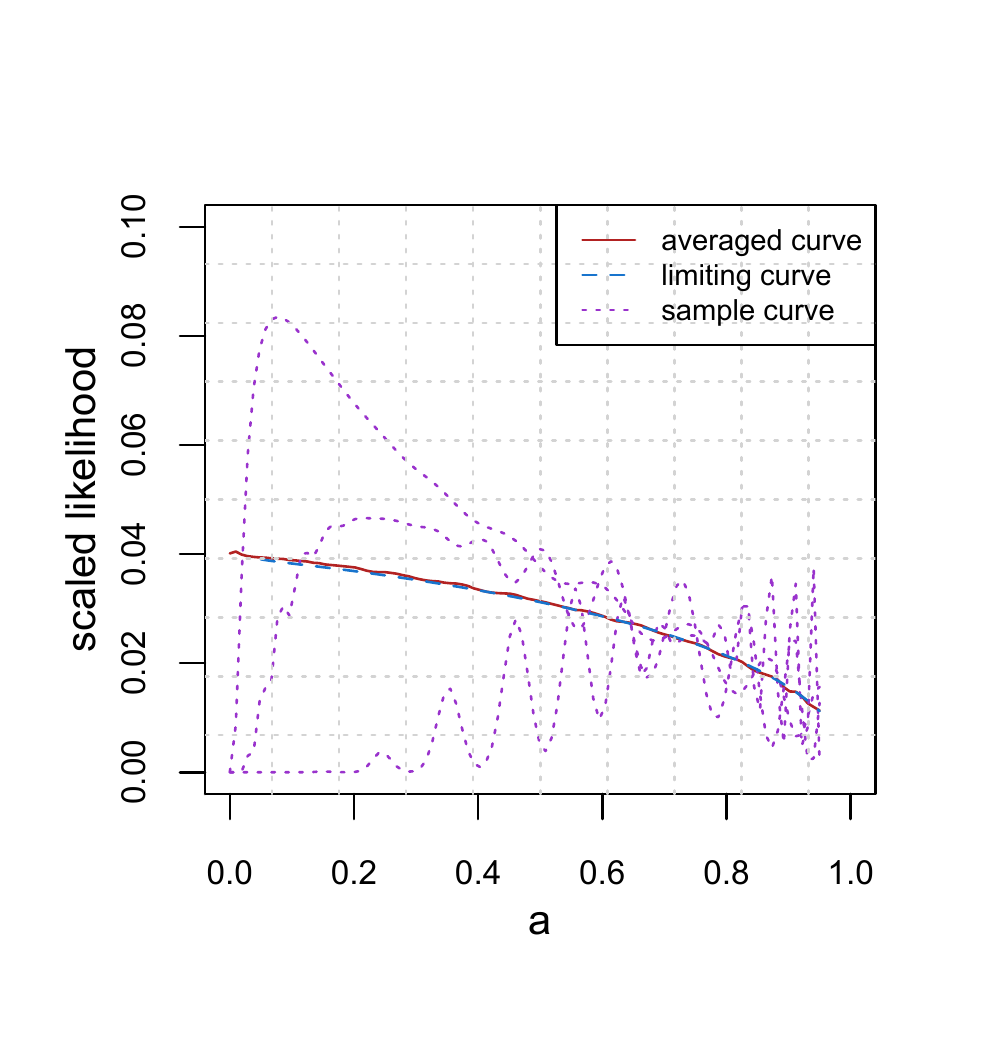} &\hspace{-1.3cm}
 \includegraphics[width=5.7cm, height=6.2cm]{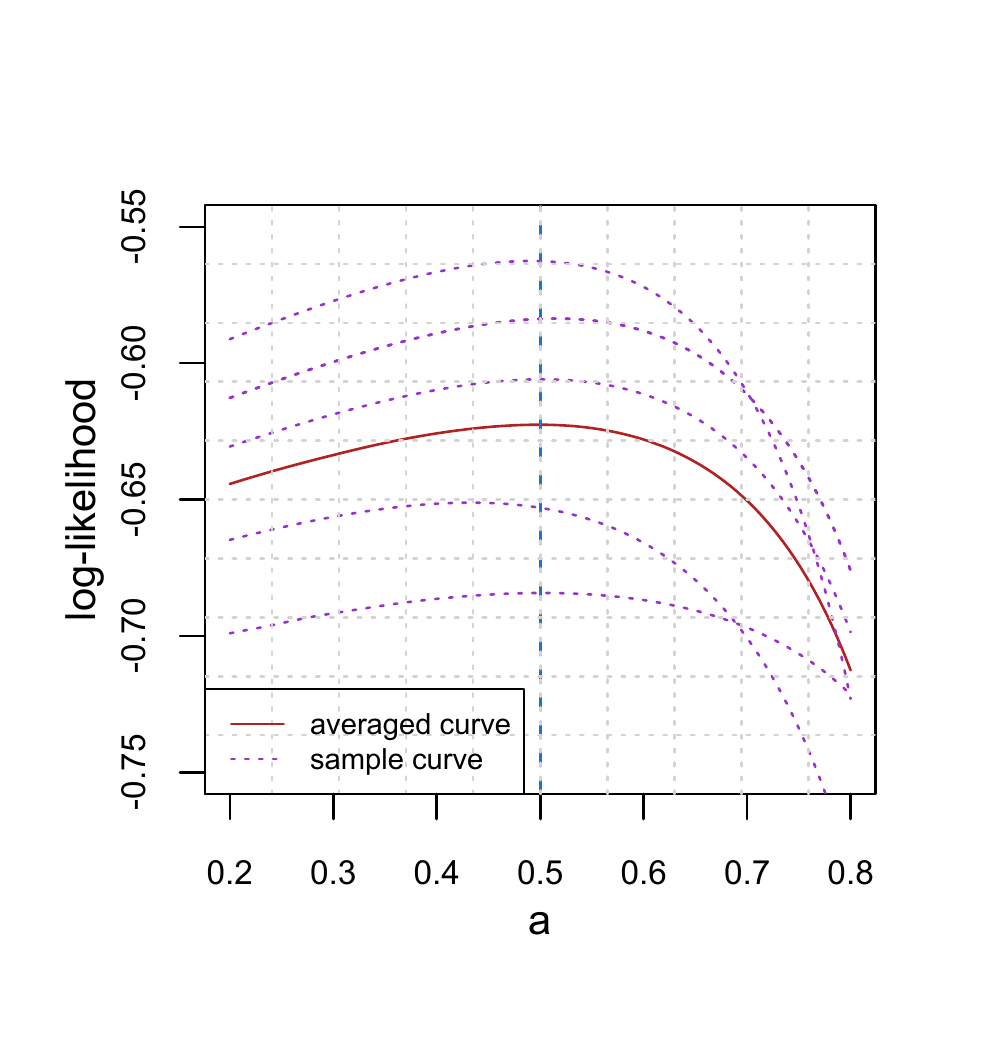}   \\ [-9ex]
\hspace{-.8cm} \includegraphics[width=5.7cm, height=6.2cm]{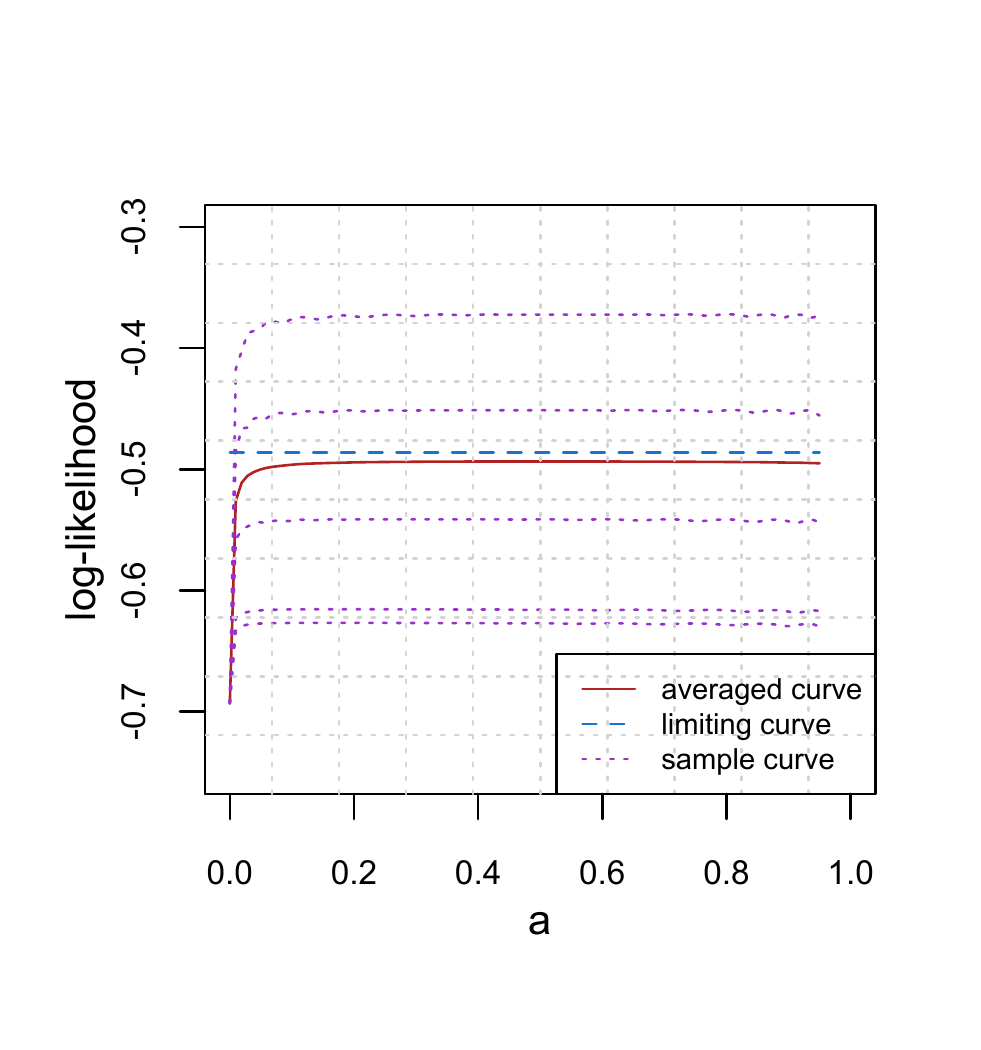} &\hspace{-1.3cm}
 \includegraphics[width=5.7cm, height=6.2cm]{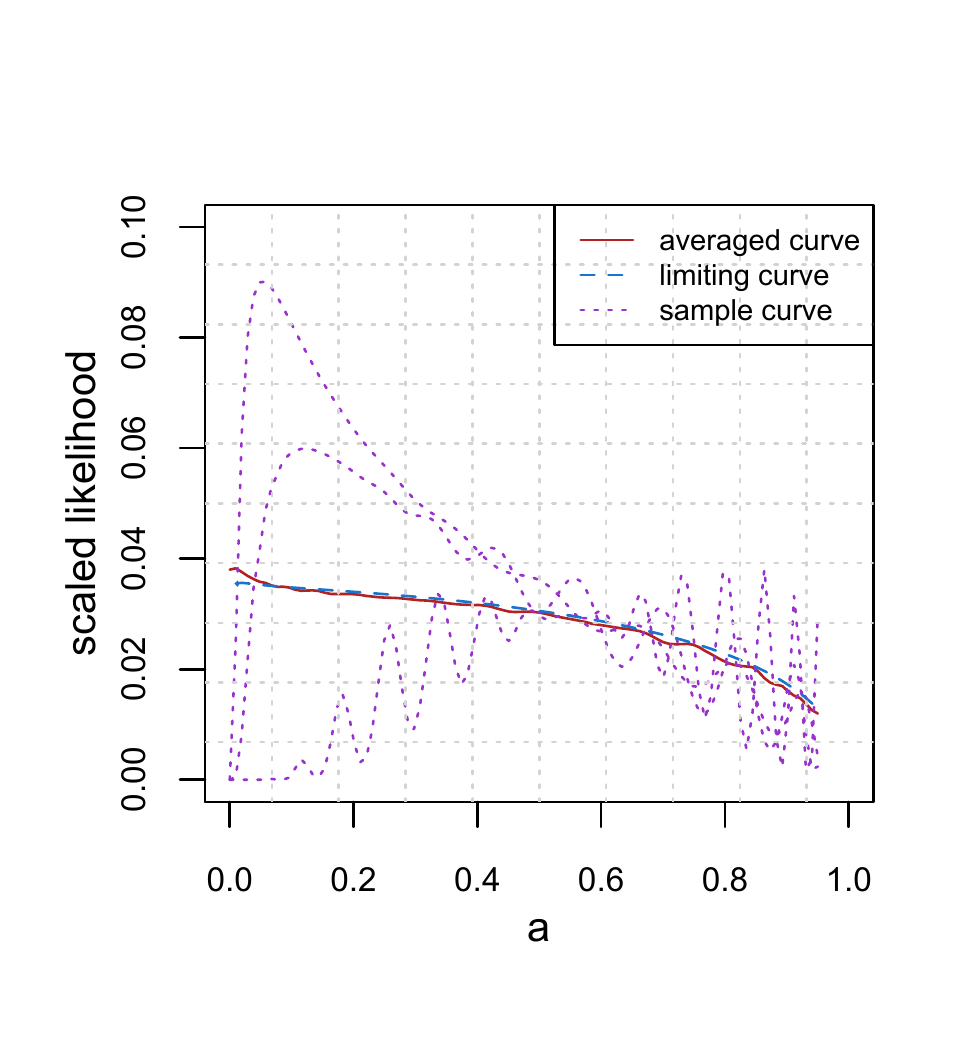} &\hspace{-1.3cm}
 \includegraphics[width=5.7cm, height=6.2cm]{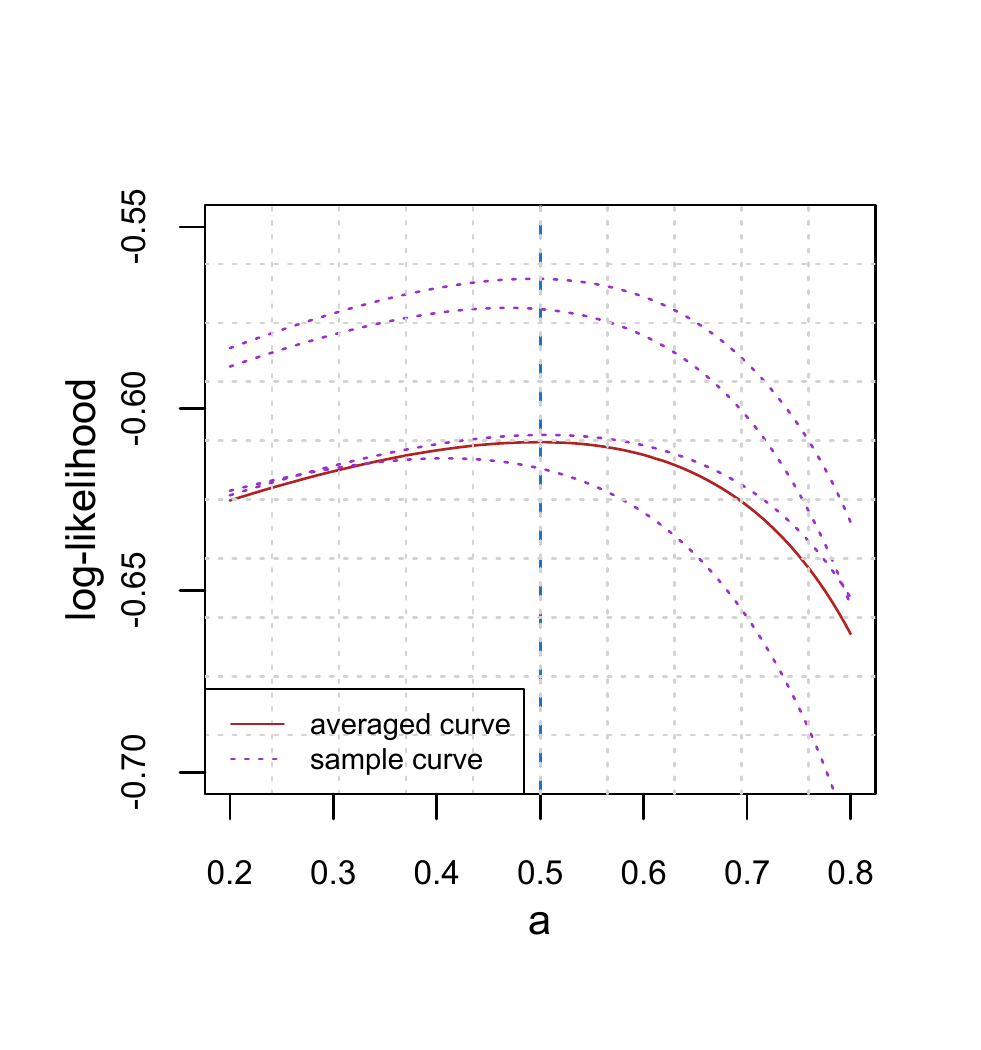}  \\[-9ex]
\hspace{-.8cm} \includegraphics[width=5.7cm, height=6.2cm]{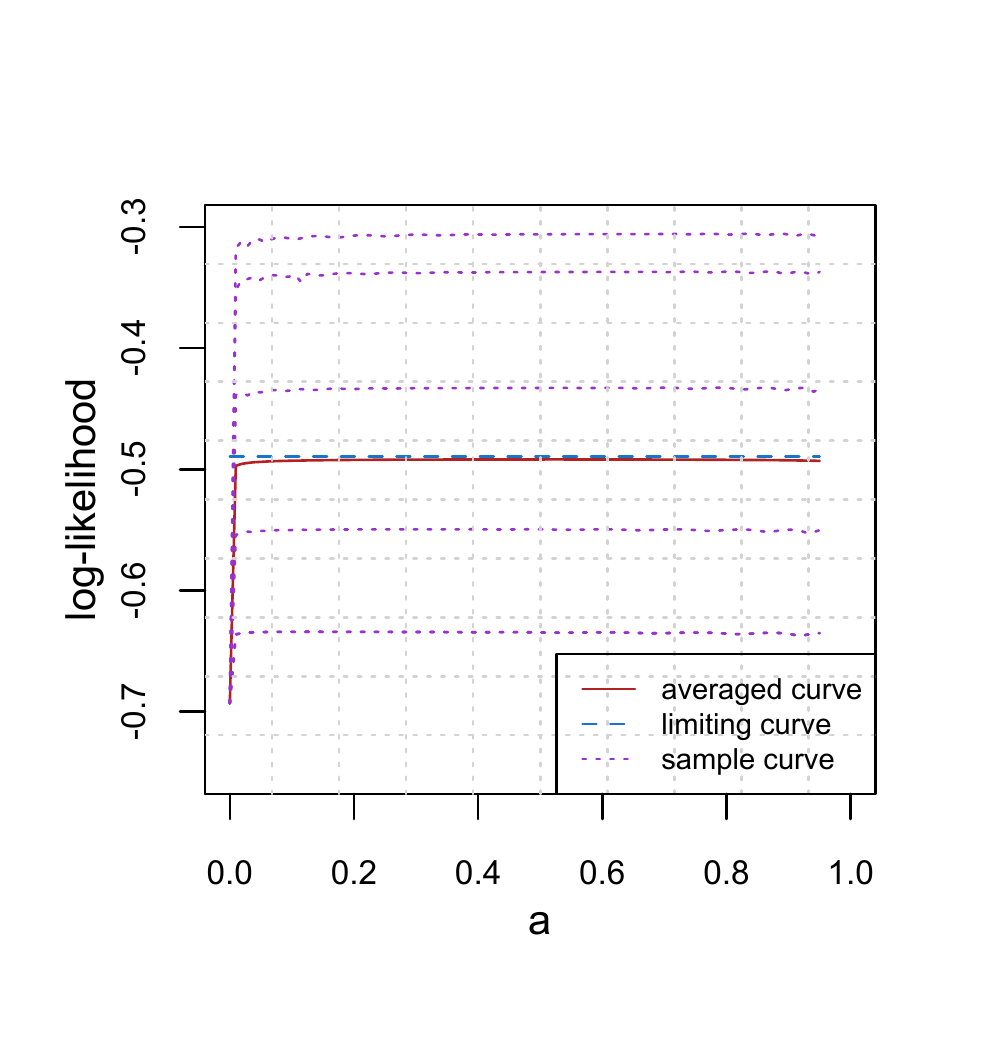} &\hspace{-1.3cm}
 \includegraphics[width=5.7cm, height=6.2cm]{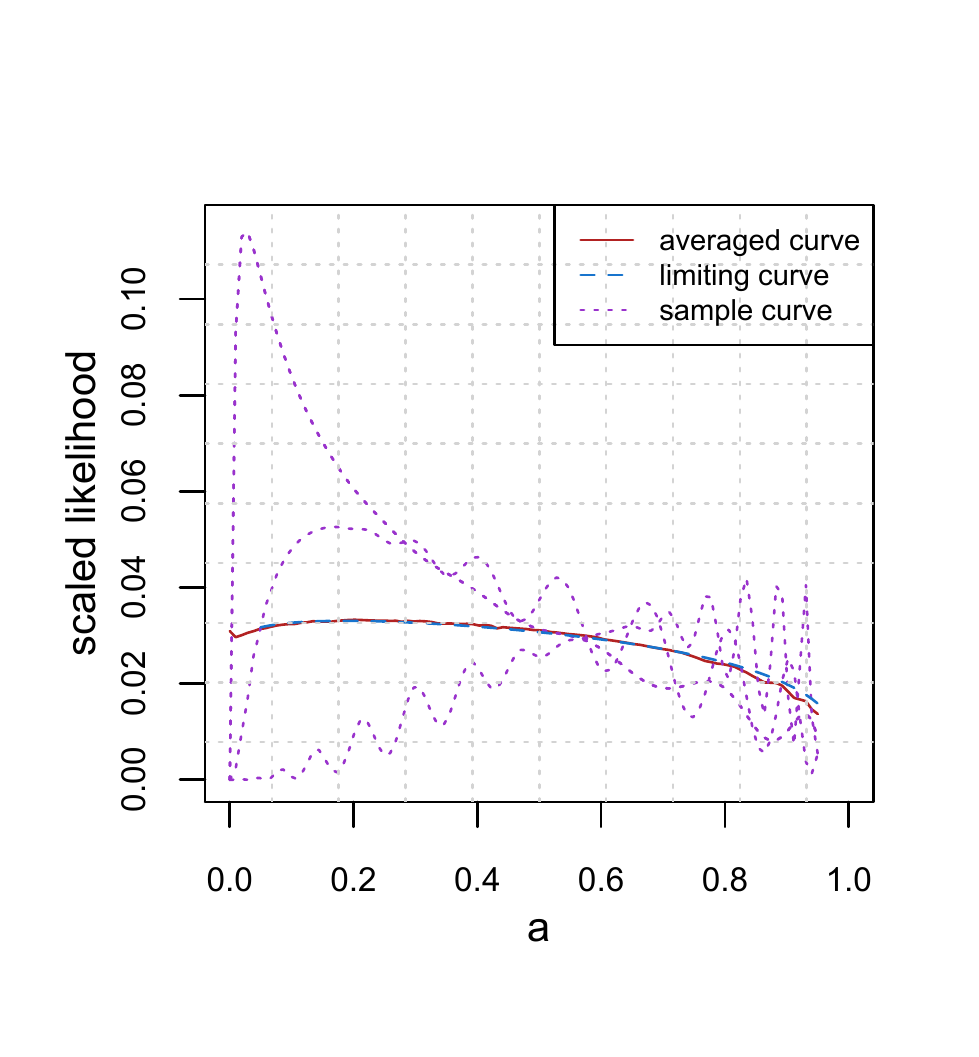} & \hspace{-1.3cm}
 \includegraphics[width=5.7cm, height=6.2cm]{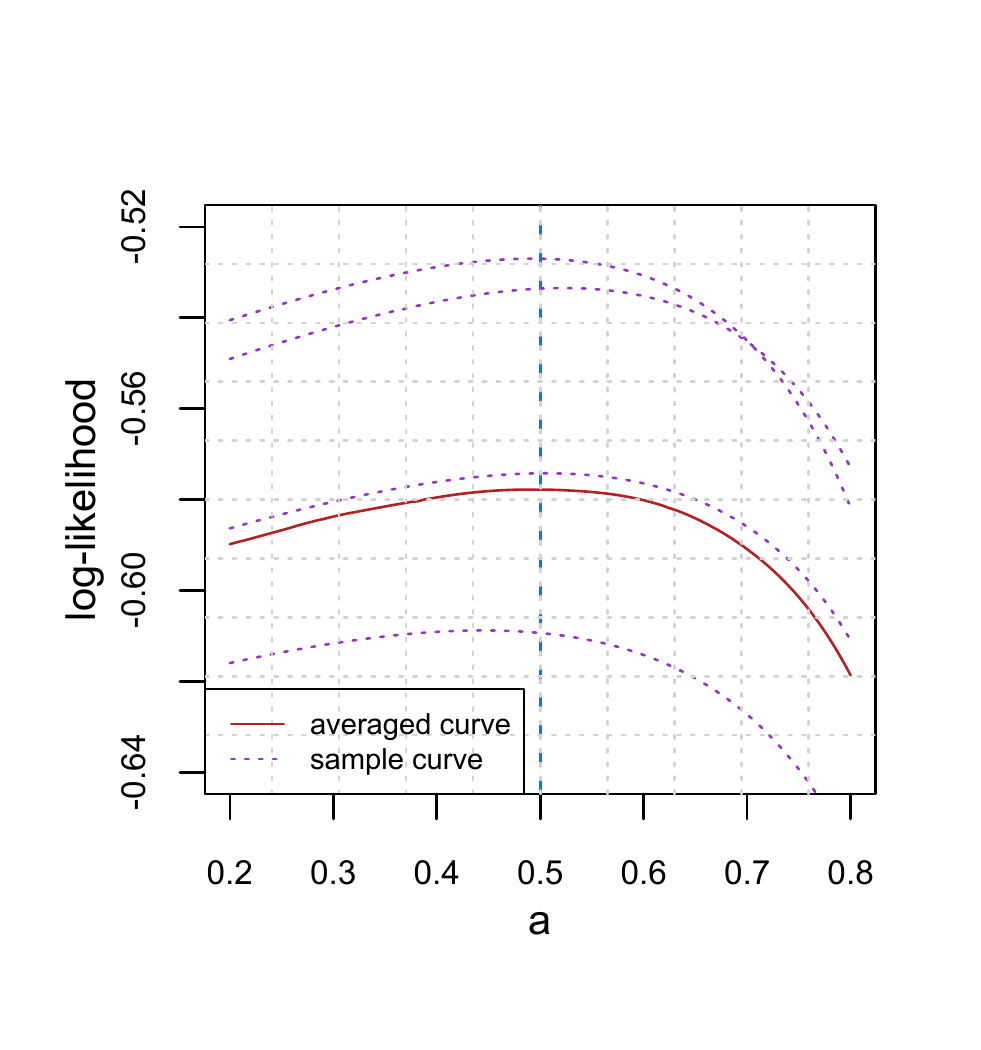}  
\end{tabular}
\vspace{-0.7cm}
\caption{Plots of normalized log-likelihood (first column) under model \eqref{model:formula2}, scaled likelihood (second column) under model \eqref{model:formula2}, and normalized log-likelihood (third column) under model \eqref{trinary:model}, for Case 1 (first row), Case 2 (second row), and Case 3 (third row).} \label{fig:two}
\end{figure}

\begin{figure}[htb]
\centering
\begin{tabular}{c}
\hspace{-0.7cm} \includegraphics[width=9cm, height=9.5cm]{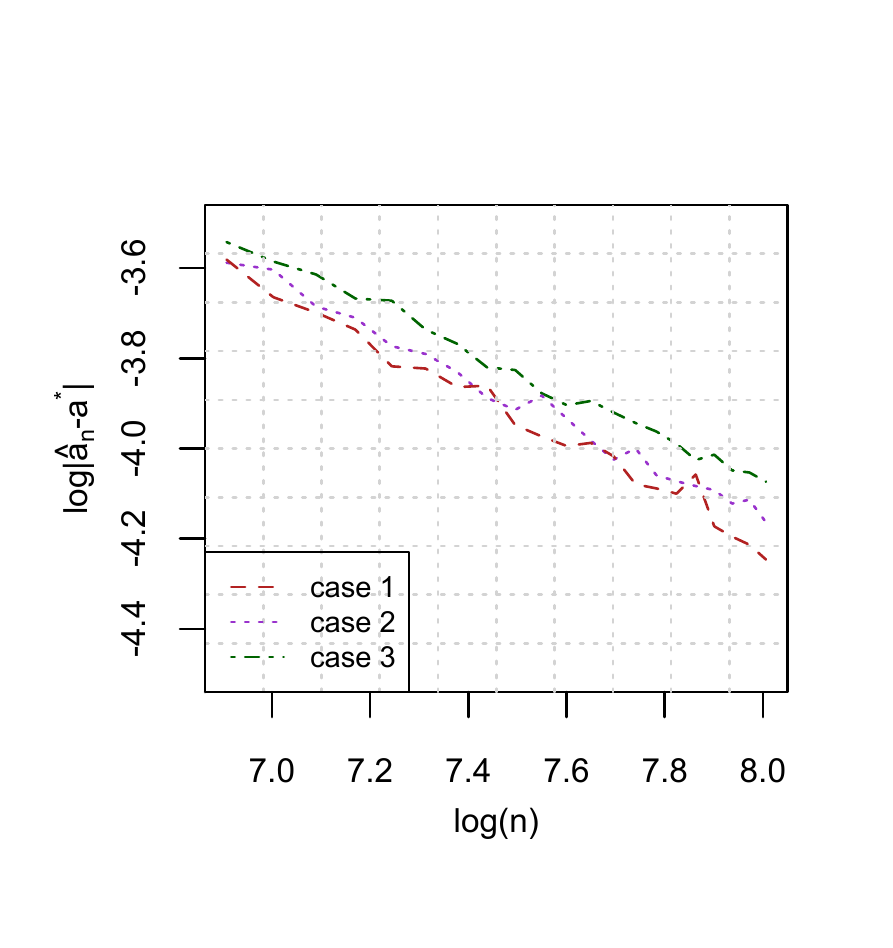} 
\end{tabular}
\vspace{-0.7cm}
\caption{Estimation error of $\hat{a}_n$ on logarithmic scale. } \label{fig:three}
\end{figure}

\subsection{Real data example}

In this section, we use stock market data to further demonstrate the estimability phenomenon of $a^*$ using binary and trinary observations. The dataset \citep{cam2018} contains 5-year (from 02-08-2013 to 02-07-2018) historical stock prices for all companies currently found on the S\&P 500 index. We merely focus on the 63 stocks from the financial sector. The quantity of interest is the logarithmic return defined as
\[
R^{(t)}_i=\log \frac{V^{(t)}_i}{V^{(t-1)}_{i}}, 
\]
where $V^{(t)}_i$ and  $V^{(t-1)}_{i}$ are the closing prices for stock $i$ on day $t$ and day $t-1$, respectively. Given a date $t$ and a stock $i$, we use $X^{(t)}_i$ to denote the standardized $R^{(t)}_i$ using the historical data $\{R^{(t-k)}_i\}_{k=1}^{100}$. The sequence $X^{(t)}=(X^{(t)}_1, \ldots, X^{(t)}_{63})$ characterizes the performances of the 63 financial stocks on day $t$. Figure \ref{fig:five} shows the Q-Q plots for $X^{(t)}$ on 6 different dates. It provides strong evidence for the Gaussianity of $X^{(t)}$. Moreover, since the 63 stocks come from the same sector, significant correlation is expected to exist among them. Therefore, we model $X^{(t)}$ as a Gaussian sequence with
\begin{align}
\label{gaussian:sequence}
X^{(t)}_i=\sqrt{1-a^*}Y_i+\sqrt{a^*}Y, \mbox{~~~where~~}Y_i's, Y\overset{i.i.d.}{\sim} N(0,1).
\end{align}
We use the following U-statistic as a reference for the ground truth $a^*$,
\begin{align}
\label{reference:estimate}
\hat{a}^{(t)}= 1-\frac{1}{2{63 \choose 2}}\sum_{1\leq i<j\leq 63}(X^{(t)}_i-X_j^{(t)})^2.
\end{align}
Note that the above estimator $\hat{a}^{(t)}$ is unbiased and $\sqrt{n}$-consistent. We obtain the binary sequence and trinary sequence by setting the thresholds $\tau=0$ and $\tau_1=-0.5, \tau_2=0.5$, respectively. We then estimate $a^*$ by maximum likelihood estimator using the binary data, and by our proposed estimator in \eqref{moment:estimate} with the trinary data. The results are summarized in Figure \ref{fig:four}. Clearly, the estimator in 	\eqref{moment:estimate} using the trinary sequence has a close performance compared with the unbiased and  consistent estimator \eqref{reference:estimate} based on the full data, while $a^*$ is poorly estimated using the binary sequence. The result delivers an interesting message: the information that the stock prices rise or fall (encoded in the binary data) is not sufficient to accurately estimate the common correlation of the stock prices; on the other hand, a bit more suffices. This is precisely the main conclusion of the paper. 
\begin{figure}[htb]
\centering
\begin{tabular}{c}
\hspace{-0.7cm} \includegraphics[width=9cm, height=9.5cm]{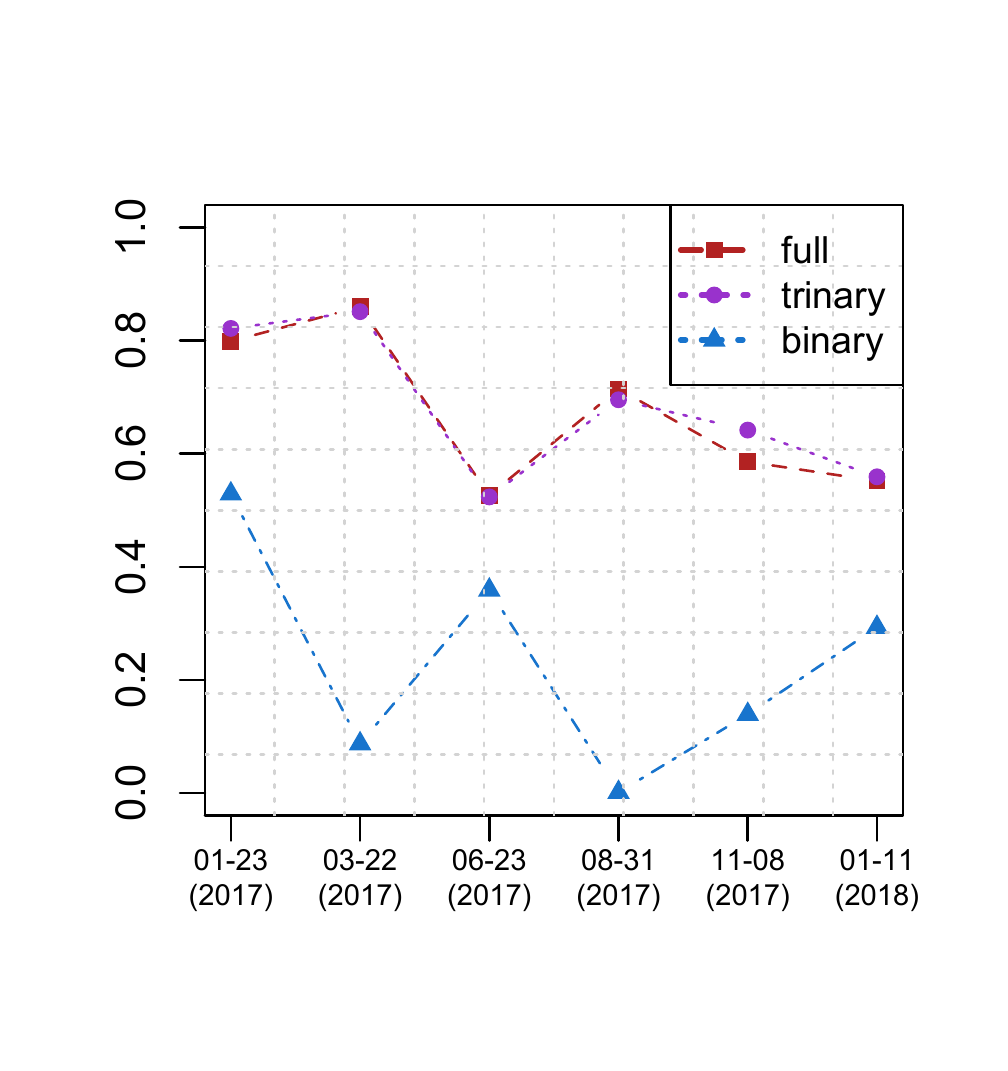} 
\end{tabular}
\vspace{-0.7cm}
\caption{Estimates for the common correlation of logarithmic returns using full data (estimator \eqref{reference:estimate}), trinary data (estimator \eqref{moment:estimate}), and binary data (MLE).} \label{fig:four}
\end{figure}

\begin{figure}[htb!]
\centering
\begin{tabular}{ccc}
\hspace{-.8cm}\includegraphics[width=5.7cm, height=6.2cm]{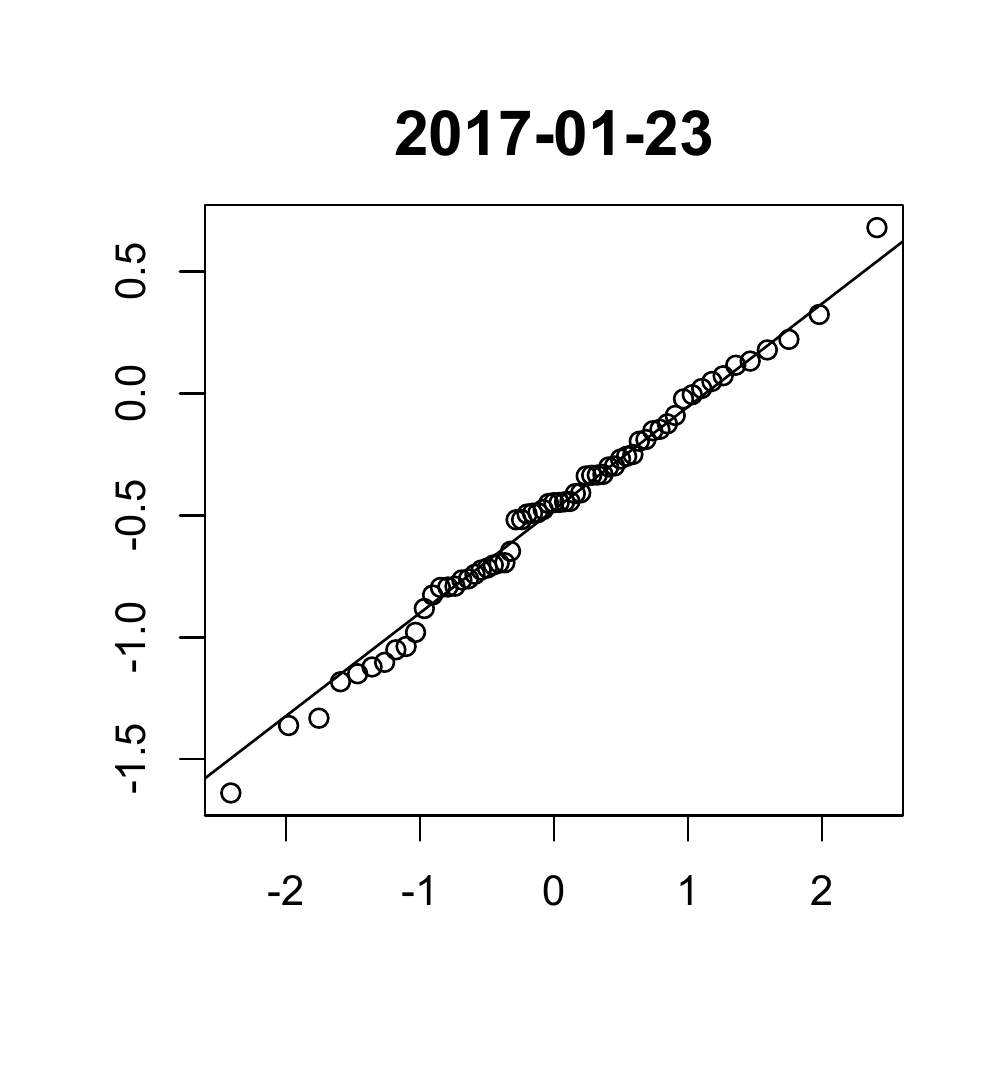} &\hspace{-1.3cm}
 \includegraphics[width=5.7cm, height=6.2cm]{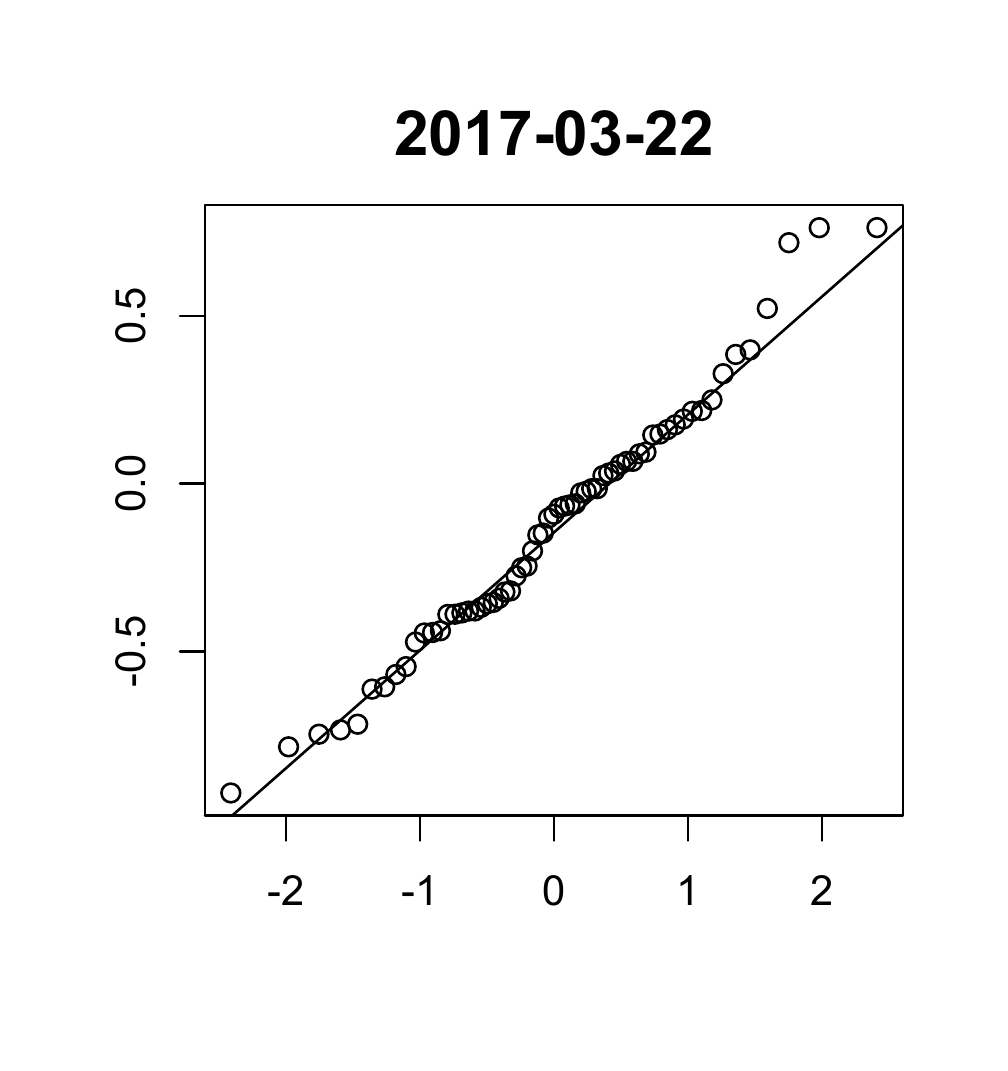} &\hspace{-1.3cm}
 \includegraphics[width=5.7cm, height=6.2cm]{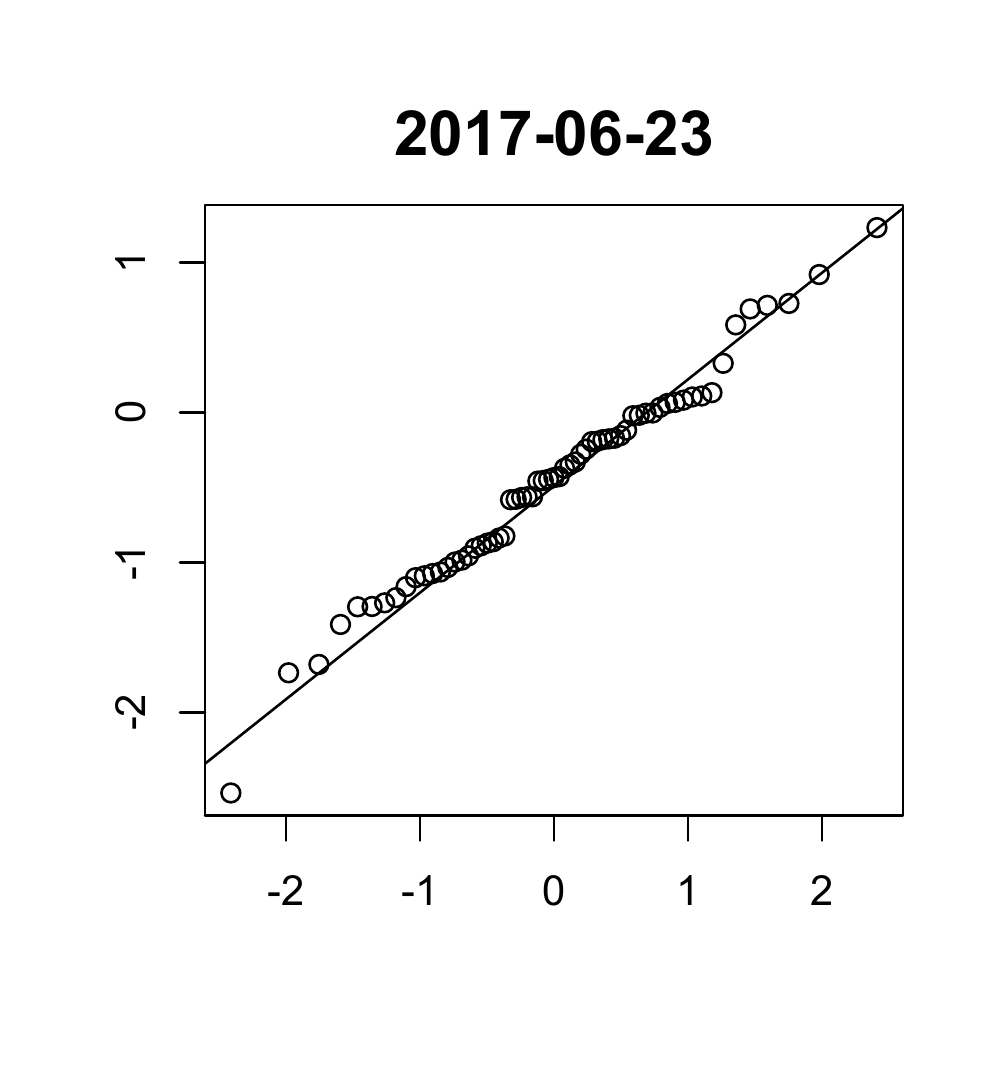}   \\ [-9ex]
\hspace{-.8cm} \includegraphics[width=5.7cm, height=6.2cm]{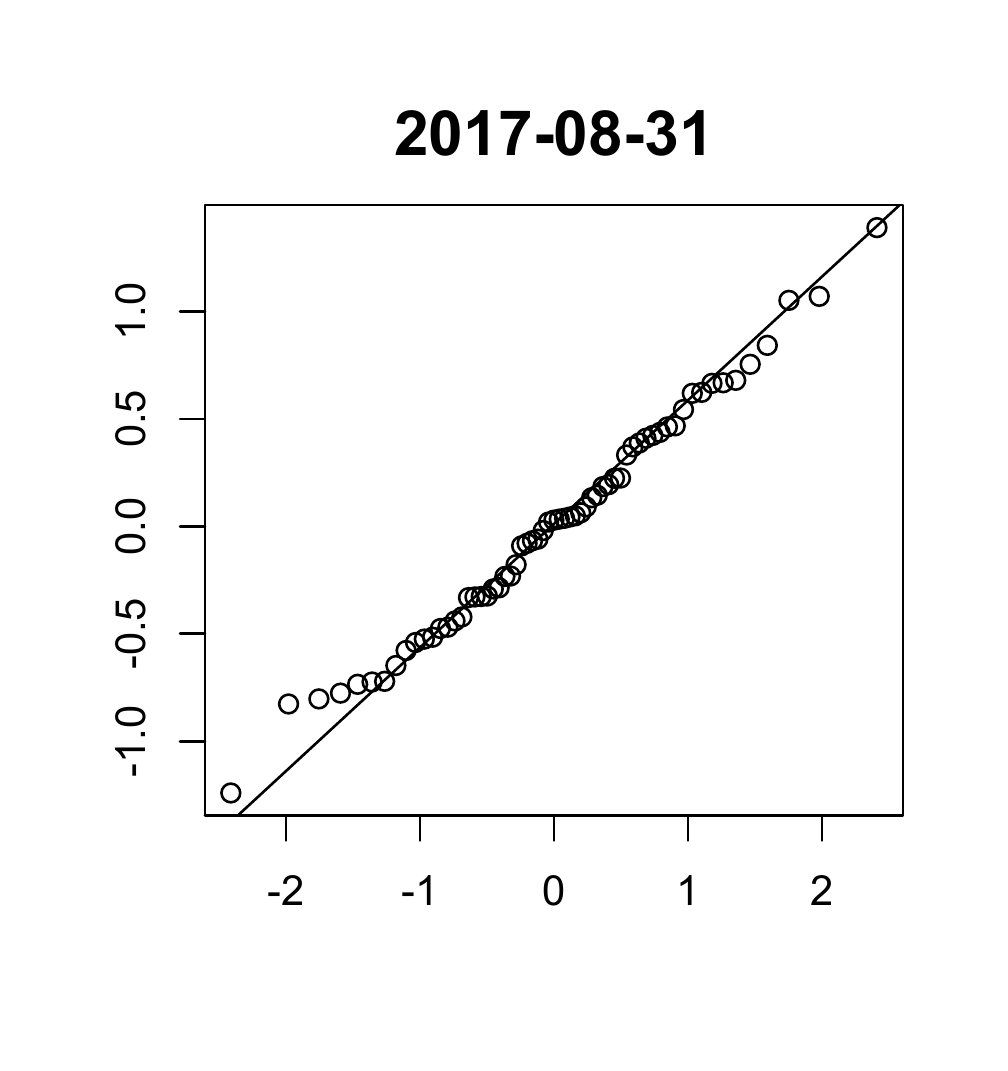} &\hspace{-1.3cm}
 \includegraphics[width=5.7cm, height=6.2cm]{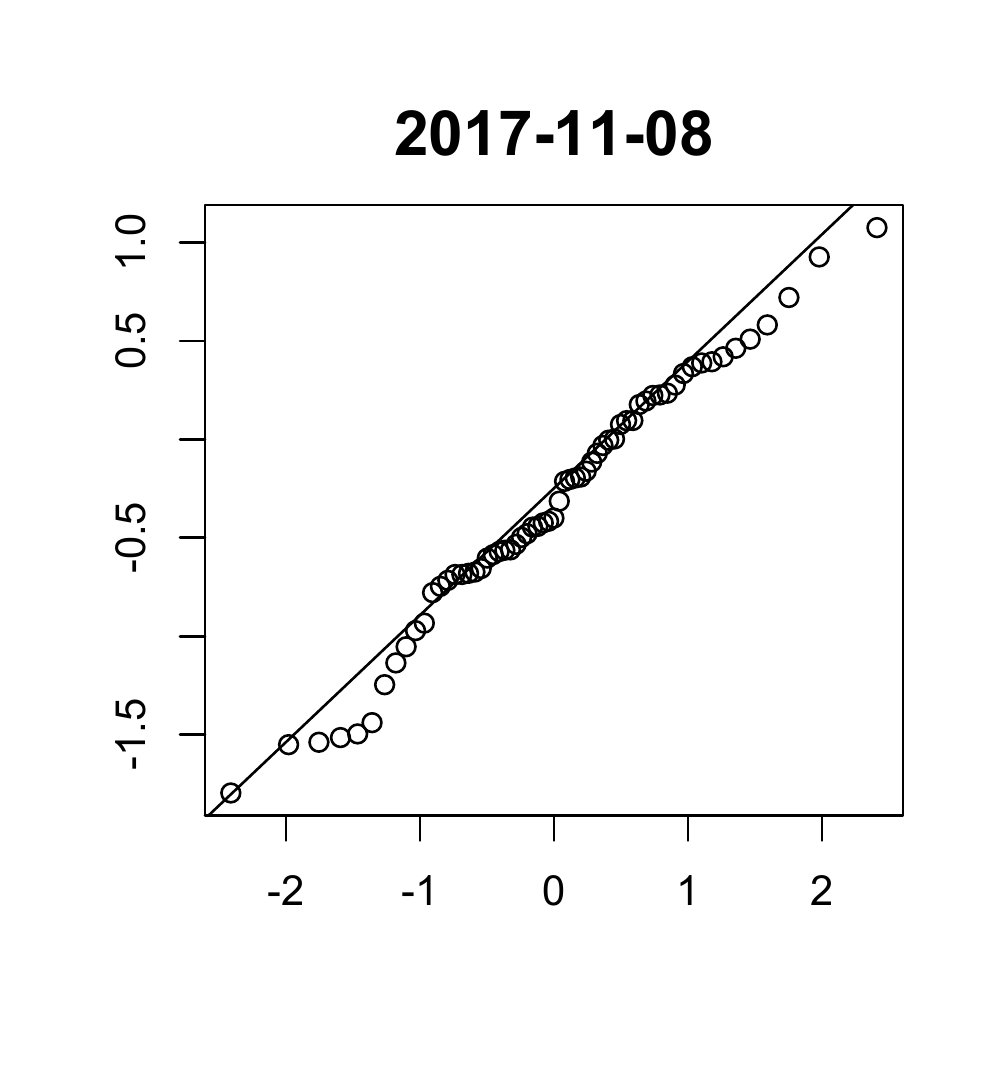} &\hspace{-1.3cm}
 \includegraphics[width=5.7cm, height=6.2cm]{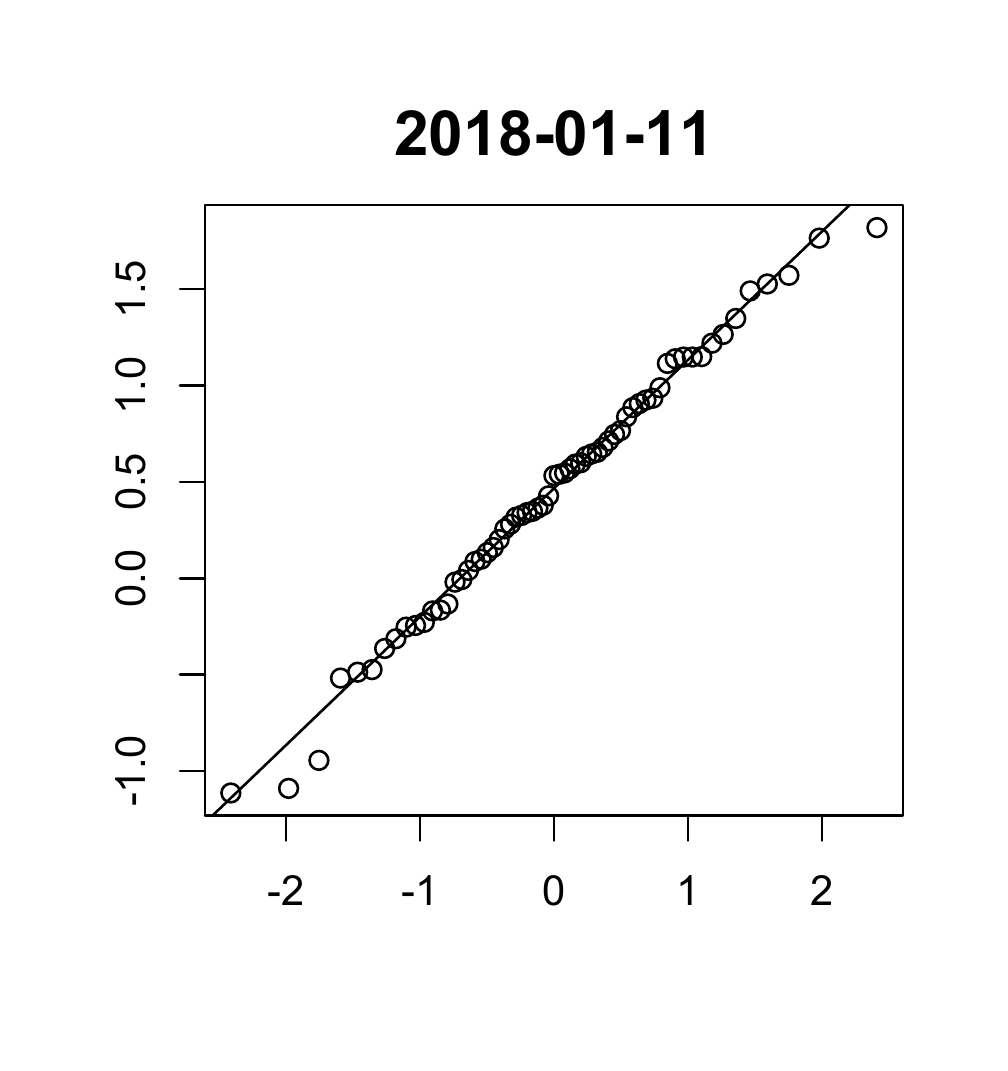}  
\end{tabular}
\vspace{-0.7cm}
\caption{Q-Q plots of logarithmic return for 63 stocks in S\&P 500 Financials Sector.} \label{fig:five}
\end{figure}

\section{Discussion} \label{discussion}

We have revealed an interesting phenomenon regarding the estimability of the correlation parameter in a binary sequence model. Several important directions are left open. One future research is to consider a larger class of models. For instance, suppose the latent sequence $(X_1,\ldots, X_n)$ in \eqref{model:formula} is modeled by the Gaussian copula family \citep{klaassen1997efficient, tsukahara2005semiparametric} with the parameter $\Sigma$ having compound symmetry covariance structure as described in \eqref{compund:symmetry}. What can we say about the estimability of $a^*$? Another direction is to consider that the covariance matrix $\Sigma$ in \eqref{compund:symmetry} is replaced by the more general one in \eqref{network:model} in network modeling. It is clear that both $a^*$ and $b^*$ are nonestimable from the one-bit information of each $X_i$. The question is how much more information is needed to consistently estimate them. Would the phase transition phenomenon we discussed in Remark 3 continue to hold? 

Our results have a few insightful implications as well. For example, modeling a dependent exchangeable binary sequence is subtle. We have shown that estimation is even impossible under a simple one-parameter model. Regarding binary network modeling, it might not be desirable to assume dependency among all the edges. Furthermore, converting a weighted network to a binary one may lose substantial information for the sake of parameter estimation.

\section{Technical details}
\label{proof:part:one}

\subsection{Proof of Theorem 1} \label{proof:thm1}

Denote $A=(A_1,\ldots, A_n)\in\mathbb{R}^n$. The proof follows the general framework of deriving minimax lower bound in \cite{Tsybakov:2008:INE:1522486}. To make it self-contained, we include the standard steps of reducing the problem of lower bounding the estimation error to upper bounding the Kullback--Leibler divergence $D_{KL}({\rm pr}_{a_1}||{\rm pr}_{a_2})$ between ${\rm pr}_{a_1}(A)$ and ${\rm pr}_{a_2}(A)$. For a given estimator $T(A)$, define a classifier $\chi_T(A)=\argmin_{a^*\in \{a_1,a_2\}}|T(A)-a^*|$, and denote a general classifier by $\chi(A)$. Then,
\begin{eqnarray*}
&&\inf_{T}\max_{a^*\in \{a_1,a_2\}}E(R(|T(A)-a^*|)) \\
&\overset{(a)}{\geq}&R(|a_1-a_2|/2) \inf_{T}\max_{a^*\in \{a_1,a_2\}} {\rm pr}_{a^*}(|T(A)-a^*|>|a_1-a_2|/2) \\
&\geq& R(|a_1-a_2|/2) \inf_{T}\max_{a^*\in \{a_1,a_2\}} {\rm pr}_{a^*}(\chi_T(A)\neq a^*)    \\
&\geq& R(|a_1-a_2|/2) \inf_T ({\rm pr}_{a_1}(\chi_T(A)\neq a_1)+{\rm pr}_{a_2}(\chi_T(A)\neq a_2)) \\
&\overset{(b)}{\geq}& 2R(|a_1-a_2|/2) \inf_{\chi}{\rm pr}_{a^*}(\chi(A)\neq a^*) \\
&=& R(|a_1-a_2|/2) \sum_{A\in \{0,1\}^n} \min({\rm pr}_{a_1}(A), {\rm pr}_{a_2}(A)) \\
&\overset{(c)}{\geq}& \frac{1}{2}R(|a_1-a_2|/2) {\rm exp}(-D_{KL}({\rm pr}_{a_1}||{\rm pr}_{a_2})),
\end{eqnarray*}
where step $(a)$ follows by Markov's inequality; the term $\inf_{\chi}{\rm pr}_{a^*}(\chi(A)\neq a^*)$ in $(b)$ is interpreted as Bayes error rate by assuming $a^*$ is uniform on $\{a_1,a_2\}$; and $(c)$ is from Lemma 2.6 in \cite{Tsybakov:2008:INE:1522486}. The rest of the proof is to  upper bound $D_{KL}({\rm pr}_{a_1}||{\rm pr}_{a_2})$. Denote $c_{a_1}=\sqrt{\frac{a_1}{1-a_1}}, c_{a_2}=\sqrt{\frac{a_2}{1-a_2}}$. It holds that $c^{-1}_{a_1}>c^{-1}_{a_2}$ because $0<a_1<a_2<1$. Then Condition \textbf{C} implies that there exists a constant $\ell>0$ such that 
\[
\gamma(c_{a_1}^{-1}z+\tau/\sqrt{a_1})\leq \ell\cdot \gamma(c_{a_2}^{-1}z+\tau/\sqrt{a_2}), \quad \forall z\in \mathbb{R}.
\]
Based on the above result, we then have 
\begin{align*}
&D_{KL}({\rm pr}_{a_1}||{\rm pr}_{a_2})=\sum_{A\in \{0,1\}^n}{\rm pr}_{a_1}(A)\log \frac{{\rm pr}_{a_1}(A)}{{\rm pr}_{a_2}(A)}\\
=&\sum_{A\in \{0,1\}^n}{\rm pr}_{a_1}(A)\log \frac{\int_{-\infty}^{\infty}[1-G(-c_{a_1}z+\tau/\sqrt{1-a_1})]^{\sum_{i}A_i}[G(-c_{a_1}z+\tau/\sqrt{1-a_1})]^{n-\sum_i A_i}\gamma(z)dz}{\int_{-\infty}^{\infty}[1-G(-c_{a_2}z+\tau/\sqrt{1-a_2})]^{\sum_{i}A_i}[G(-c_{a_2}z+\tau/\sqrt{1-a_2})]^{n-\sum_i A_i}\gamma(z)dz} \\
\overset{(d)}{=}& \sum_{A\in \{0,1\}^n}{\rm pr}_{a_1}(A)\log \frac{c_{a_1}^{-1}\int_{-\infty}^{\infty}[1-G(-z)]^{\sum_{i}A_i}[G(-z)]^{n-\sum_i A_i}\gamma(c^{-1}_{a_1}z+\tau/\sqrt{a_1})dz}{c_{a_2}^{-1}\int_{-\infty}^{\infty}[1-G(-z)]^{\sum_{i}A_i}[G(-z)]^{n-\sum_i A_i}\gamma(c^{-1}_{a_2}z+\tau/\sqrt{a_2})dz} \\
\leq&\sum_{A\in \{0,1\}^n}{\rm pr}_{a_1}(A)\log\frac{c_{a_2}\cdot \ell}{c_{a_1}}=\log\frac{c_{a_2}\cdot \ell}{c_{a_1}} <\infty.
\end{align*} 
Here, $(d)$ is  due to a change of variables.

$\hfill \qed$

\subsection{Proof of Theorem \ref{three:thm}} \label{thm2:proof:add}

Denote 
\[
c_{\tau_1,a^*,Y}=G\Big(\frac{\tau_1-\sqrt{a^*}Y}{\sqrt{1-a^*}}\Big), \quad c_{\tau_2,a^*,Y}=1-G\Big(\frac{\tau_2-\sqrt{a^*}Y}{\sqrt{1-a^*}}\Big),
\]
and define a two-dimensional function $H(\cdot,\cdot): [0,1]\times [0,1] \rightarrow \mathbb{R}$,
\[
H(u,v)=1-\bigg[\frac{\tau_1-\tau_2}{G^{-1}(u)-G^{-1}(1-v)}\bigg]^2\cdot \mathbbm{1}_{(u,v)\in \mathcal{B}},
\]
where $\mathcal{B}=\{(u,v)\in [0,1]\times [0,1]: 0<u<1,0<v<1,u+v\neq 1\}$. Then it is straightforward to verify that
\begin{align}
\label{rewrite:form}
\hat{a}_n=H(\bar{A}^{(1)},\bar{A}^{(3)}), \quad a^*=H(c_{\tau_1,a^*,Y}, c_{\tau_2,a^*,Y}).
\end{align}
Moreover, for a given pair $(a,b)\in \mathcal{B}$, define a set 
\begin{align*}
&\Delta_{a,b}=\{(x,y)\in [0,1]\times [0,1]: \sqrt{(x-a)^2+(y-b)^2}\leq \epsilon_{a,b}\}, \\
&\epsilon_{a,b}=\min\Big\{\frac{|a+b-1|}{2\sqrt{2}},\frac{a}{2},\frac{1-a}{2}, \frac{b}{2},\frac{1-b}{2}\Big\}.
\end{align*}
It is direct to confirm that $\Delta_{a,b} \subseteq \mathcal{B}$.
By Hoeffding's inequality, we have $\forall t >0$,
\begin{align*}
 {\rm pr}(|\bar{A}^{(1)}-c_{\tau_1,a^*,Y}|>t)\leq 2\exp(-2nt^2), \quad {\rm pr}( |\bar{A}^{(3)}-c_{\tau_2,a^*,Y}|>t)\leq 2\exp(-2nt^2).
\end{align*}
Hence,
\begin{align}
\label{order:converge}
\bar{A}^{(1)}-c_{\tau_1,a^*,Y}=O_p(n^{-1/2}), \quad \bar{A}^{(3)}-c_{\tau_2,a^*,Y}=O_p(n^{-1/2}).
\end{align}
By \eqref{rewrite:form} we can write 
\begin{align*}
\sqrt{n}(\hat{a}_n-a^*)=&\sqrt{n}\cdot [H(\bar{A}^{(1)},\bar{A}^{(3)})-H(c_{\tau_1,a^*,Y}, c_{\tau_2,a^*,Y})]\cdot \mathbbm{1}_{(\bar{A}^{(1)},\bar{A}^{(3)})\in \Delta_{c_{\tau_1,a^*,Y}, c_{\tau_2,a^*,Y}}}+ \\
&\sqrt{n}\cdot [H(\bar{A}^{(1)},\bar{A}^{(3)})-H(c_{\tau_1,a^*,Y}, c_{\tau_2,a^*,Y})]\cdot \mathbbm{1}_{(\bar{A}^{(1)},\bar{A}^{(3)})\notin \Delta_{c_{\tau_1,a^*,Y}, c_{\tau_2,a^*,Y}}}\\
=&J_1+J_2.
\end{align*}
We now bound the two terms $J_1$ and $J_2$ respectively. For $J_1$, since $(c_{\tau_1,a^*,Y}, c_{\tau_2,a^*,Y})\in \mathcal{B}$, it holds that $\Delta_{c_{\tau_1,a^*,Y}, c_{\tau_2,a^*,Y}}\subseteq \mathcal{B}$. Note that $H(u,v)$ is continuously differentiable over the open set $\mathcal{B}$. Therefore, we can take first-order Taylor expansion of $H(u,v)$ at $(u,v)=(c_{\tau_1,a^*,Y}, c_{\tau_2,a^*,Y})$ to obtain
\begin{align*}
J_1\leq \sqrt{n}\cdot \big|\frac{\partial H(c_1,c_2)}{\partial u}\big |\cdot |\bar{A}^{(1)}-c_{\tau_1,a^*,Y}|+\sqrt{n}\cdot \big|\frac{\partial H(c_1,c_2)}{\partial v}\big |\cdot |\bar{A}^{(3)}-c_{\tau_2,a^*,Y}|,
\end{align*}
where $(c_1,c_2)\in \Delta_{c_{\tau_1,a^*,Y}, c_{\tau_2,a^*,Y}}$. Based on \eqref{order:converge} and the fact that $\big|\frac{\partial H(c_1,c_2)}{\partial u}\big |=O_p(1), \big|\frac{\partial H(c_1,c_2)}{\partial v}\big |=O_p(1)$, we conclude that $J_1=O_p(1)$. Regarding $J_2$, since 
\[
(\bar{A}^{(1)},\bar{A}^{(3)})\overset{a.s.}{\longrightarrow } (c_{\tau_1,a^*,Y}, c_{\tau_2,a^*,Y}),
\]
with probability one we have as $n\rightarrow \infty$,
\[
(\bar{A}^{(1)},\bar{A}^{(3)})\in \Delta_{c_{\tau_1,a^*,Y}, c_{\tau_2,a^*,Y}},
\]
which implies that $J_2=O_p(1)$.
$\hfill \qed$

\section*{Acknowledgement}
This research is partially supported by NSF CAREER grant DMS-1554804. The authors are grateful to Professor Anthony C. Davison and Professor Zhiliang Ying for their insightful comments which greatly improved the scope and presentation of this paper.

\appendix

\section{ Notations and preliminaries}

Recall that under the model \eqref{model:formula2}, $G(\cdot)$ and $p(\cdot)$ are the cumulative distribution function and probability density function of $Y_i$ respectively; $G^{-1}(\cdot)$ is the inverse function of $G(\cdot)$; and $\gamma(\cdot)$ is the probability density function of $Y$. We will use the following notations extensively, 
\begin{eqnarray*}
\bar{A}&=&n^{-1}\sum_{i=1}^nA_i, ~~z^*=-\sqrt{\frac{1-a}{a}}G^{-1}(1-\bar{A}), \\
F_n(z)&=&\bar{A}\log(1-G(-c_az))+(1-\bar{A})\log G(-c_az), ~~c_a=\sqrt{\frac{a}{1-a}}.
\end{eqnarray*}

 The following lemma will be useful in the later proofs.

\begin{lemma}\label{cdf:normal}
The followings hold:
\begin{itemize}
\item[(i)] The likelihood function takes the form
\[
L_n(a;\{A_i\})=\int_{-\infty}^{\infty}\exp(nF_n(z))\cdot \gamma(z+\tau/\sqrt{a})dz.
\]
\item[(ii)] When $\bar{A}\in (0,1)$, the function $F_n(z)$ is quasi-concave, and obtains the maximum at a unique point $z=z^*$.
\item[(iii)] Under Condition \textbf{B}, there exists a constant $\kappa_a$ only depending on $a$ such that 
\[
\sup_{z\in\mathbb{R}}|F^{'''}_n(z)| \leq \kappa_a<\infty.
\]
\item[(iv)] When $\bar{A}\in (0,1)$, we have
\begin{align*}
F_n(z^*)=\bar{A}\log\bar{A}+(1-\bar{A})\log(1-\bar{A}), \quad F''_n(z^*)=\frac{-c^2_a[p(G^{-1}(1-\bar{A}))]^2}{\bar{A}(1-\bar{A})}.
\end{align*}
\end{itemize}

\end{lemma}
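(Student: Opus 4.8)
The plan is to establish the four parts in the order stated, since each reuses the critical-point identity and the derivative computation of the previous ones. For part (i), I would begin from the likelihood \eqref{likelihood:form} and apply the substitution $w = z + \tau/\sqrt{a}$. A one-line computation gives $\frac{\tau - \sqrt{a}\,w}{\sqrt{1-a}} = -c_a z$, so the two bracketed factors collapse to $1 - G(-c_a z)$ and $G(-c_a z)$, the weight $\gamma(w)$ becomes $\gamma(z + \tau/\sqrt{a})$, and $dw = dz$. Rewriting the product of the two powers as a single exponential, $\exp\big(n[\bar A\log(1-G(-c_a z)) + (1-\bar A)\log G(-c_a z)]\big) = \exp(nF_n(z))$, yields the stated integral representation.

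For part (ii) I would differentiate, using $\frac{d}{dz}G(-c_a z) = -c_a p(-c_a z)$, to obtain
\[
F_n'(z) = c_a\, p(-c_a z)\Big[\frac{\bar A}{1-G(-c_a z)} - \frac{1-\bar A}{G(-c_a z)}\Big].
\]
Because $p$ is positive on $\mathbb{R}$ (its support is all of $\mathbb{R}$), the sign of $F_n'$ equals the sign of the bracket. I would then reparametrize by $t = G(-c_a z) \in (0,1)$, which is strictly decreasing in $z$; as a function of $t$ the bracket $\frac{\bar A}{1-t} - \frac{1-\bar A}{t}$ is strictly increasing and vanishes only at $t = 1-\bar A$. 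Translating back through the monotone relation between $t$ and $z$ shows $F_n' > 0$ for $z < z^*$ and $F_n' < 0$ for $z > z^*$, where $G(-c_a z^*) = 1 - \bar A$, i.e.\ $z^* = -\sqrt{(1-a)/a}\,G^{-1}(1-\bar A)$. This gives unimodality (hence quasi-concavity) together with the uniqueness of the maximizer.

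For part (iii), writing $F_n(z) = \bar A\,\psi(-c_a z) + (1-\bar A)\,\phi(-c_a z)$ with $\phi = \log G$ and $\psi = \log(1-G)$, the chain rule gives $F_n'''(z) = -c_a^3[\bar A\,\psi'''(-c_a z) + (1-\bar A)\,\phi'''(-c_a z)]$. Condition \textbf{B} bounds $\sup|\phi'''|$ and $\sup|\psi'''|$, so the convex combination is dominated by their maximum and $\sup_z|F_n'''(z)| \le c_a^3\max(\sup|\phi'''|,\sup|\psi'''|) =: \kappa_a$, which depends only on $a$ and the fixed $G$. For part (iv), the identity $G(-c_a z^*) = 1-\bar A$ from (ii) turns the two logarithms in $F_n(z^*)$ into $\bar A\log\bar A + (1-\bar A)\log(1-\bar A)$ directly. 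For the curvature I would differentiate the factored form $F_n'(z) = c_a p(-c_a z)B(z)$, where $B$ denotes the bracket; since $B(z^*) = 0$, only $c_a p(-c_a z^*)B'(z^*)$ survives at $z^*$. Computing $B'(z^*)$ with $G(-c_a z^*) = 1-\bar A$ gives $B'(z^*) = -c_a p(-c_a z^*)/[\bar A(1-\bar A)]$, and substituting $-c_a z^* = G^{-1}(1-\bar A)$ produces the claimed $F_n''(z^*) = -c_a^2[p(G^{-1}(1-\bar A))]^2/[\bar A(1-\bar A)]$.

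The computations are mostly routine; the step requiring the most care is the sign analysis in (ii), where both the uniqueness of the critical point and the single sign change of $F_n'$ must be verified, and where the positivity of $p$ on all of $\mathbb{R}$ is essential — the monotone reparametrization $t = G(-c_a z)$ is the cleanest way to avoid a case analysis. The other point to watch is in (iii): I must keep $\kappa_a$ free of the data, which is precisely why I bound the convex combination $\bar A\psi''' + (1-\bar A)\phi'''$ by a single supremum rather than retaining both terms.
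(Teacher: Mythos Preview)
Your proposal is correct and follows essentially the same route as the paper's proof: a change of variables for (i), the derivative computation $F_n'(z)=c_a p(-c_a z)\big[\bar A/(1-G(-c_a z))-(1-\bar A)/G(-c_a z)\big]$ and a sign analysis for (ii), a direct chain-rule bound using Condition~\textbf{B} for (iii), and substitution of $G(-c_a z^*)=1-\bar A$ for (iv). The paper simply asserts that these steps are ``straightforward'' whereas you spell them out, including the nice monotone reparametrization $t=G(-c_a z)$ in (ii), but there is no substantive difference in approach.
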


\begin{proof}
For Part (i), the likelihood function has been written in \eqref{likelihood:form}. A change of variables leads to the current form. Regarding Part (ii), we first compute the derivative of $F_n(z)$,
\begin{align*}
F'_n(z)=c_a\cdot p(-c_az)\cdot \Big[\frac{\bar{A}}{1-G(-c_az)}-\frac{1-\bar{A}}{G(-c_az)}\Big],
\end{align*}
It is straightforward to confirm that $F'_n(z)$ is positive for $z \in (\infty, z^*)$, and negative for $z\in (z^*,\infty)$. Part (iii) can be directly verified by computing $F^{'''}_n(z)$ under  Condition \textbf{B}. Part (iv) can be checked by some straightforward calculations. 
\end{proof}

\section{Proof of Proposition 1} 

According to Lemma \ref{cdf:normal} Parts (i) and (ii), we can readily have the upper bound, if $0<\bar{A}<1$,
\begin{eqnarray}\label{upper:bound}
\frac{\log L_n(a;\{A_i\})}{n}=\frac{1}{n}\log\int_{-\infty}^{\infty}\exp(nF_n(z))\cdot \gamma(z+\tau/\sqrt{a})dz\leq F_n(z^*).
\end{eqnarray}
Regarding the lower bound, if $0< \bar{A} < 1$, we first have for $\epsilon \in (0,\infty)$,
\begin{eqnarray}\label{lower:bound}
&&\int_{-\infty}^{\infty}\exp(nF_n(z))\cdot \gamma(z+\tau/\sqrt{a})dz  \geq  \int_{z^*-\epsilon}^{z^*+\epsilon}\exp(nF_n(z))\cdot \gamma(z+\tau/\sqrt{a})dz \nonumber \\
&=&\int_{z^*-\epsilon}^{z^*+\epsilon}\exp(nF_n(z^*)+nF''_n(\tilde{z})(z-z^*)^2/2)\cdot \gamma(z+\tau/\sqrt{a})dz,
\end{eqnarray}
where the equality holds by a second-order Taylor expansion and $|\tilde{z}-z^*|\leq  \epsilon$. According to Lemma \ref{cdf:normal} Part (iii), $F''_n(\cdot)$ is Lipschitz continuous with a Lipschitz constant $\kappa_a$. Also, Condition \textbf{A} implies that $\gamma(\cdot)$ is Lipschitz continuous with some constant $L_{\gamma}$. Therefore, choosing $\epsilon=\min\{\frac{ \gamma(z^*+\tau/\sqrt{a})}{2L_{\gamma}},\frac{-F''_n(z^*)}{\kappa_a}\}$, we get for $z\in [z^*-\epsilon, z^*+\epsilon]$,
\begin{align*}
F''_n(\tilde{z})\geq 2 F''_n(z^*), \quad \gamma(z+\tau/\sqrt{a}) \geq \frac{1}{2} \gamma(z^*+\tau/\sqrt{a}),
\end{align*}
which enables us to continue from \eqref{lower:bound} to have
\begin{align}
\label{final:lower:bound:use}
\int_{-\infty}^{\infty}\exp(nF_n(z))\cdot \gamma(z+\tau/\sqrt{a})dz&\geq \frac{1}{2}\gamma(z^*+\tau/\sqrt{a})e^{nF_n(z^*)}\int_{z^*-\epsilon}^{z^*+\epsilon}\exp(nF''_n(z^*)(z-z^*)^2)dz \nonumber \\
&=\frac{\gamma(z^*+\tau/\sqrt{a})e^{nF_n(z^*)}}{2\sqrt{-nF''_n(z^*)}}\int_{-\epsilon\sqrt{-nF''_n(z^*)}}^{\epsilon \sqrt{-nF''_n(z^*)}}e^{-z^2}dz.
\end{align}
The last equality is due to a change of variables. We thus have obtained the lower bound when $0<\bar{A}<1$,
\begin{align}
\label{final:lower:bound}
\frac{\log L_n(a;\{A_i\})}{n} \geq F_n(z^*)-\frac{\log n}{2n} + \frac{1}{n}\log\Big(\frac{\gamma(z^*+\tau/\sqrt{a})}{2\sqrt{-F''_n(z^*)}}\int_{-\epsilon\sqrt{-nF''_n(z^*)}}^{\epsilon \sqrt{-nF''_n(z^*)}}e^{-z^2}dz\Big).
\end{align}
Under  model formulation \eqref{model:formula2}, it is clear that 
\begin{align}
\label{a:bar:limit}
\bar{A}\overset{a.s.}{\longrightarrow} 1- G\Big(\frac{\tau-\sqrt{a^*}Y}{\sqrt{1-a^*}}\Big), \quad \mbox{~~as~} n\rightarrow \infty.
\end{align}
Based on the above result and Lemma \ref{cdf:normal} Part (iv), we have almost surely as $n\rightarrow \infty$,
\begin{align}
\label{convergence:results}
&z^*\rightarrow \frac{\sqrt{1-a}(\sqrt{a^*}Y-\tau)}{\sqrt{a(1-a^*)}}, \quad \mathbbm{1}_{0<\bar{A}<1}\rightarrow 1, \quad F''_n(z^*) \rightarrow \frac{-c_a^2p^2(-c_{a^*}Y+c_{a^*,\tau})}{(1-G(-c_{a^*}Y+c_{a^*,\tau}))\cdot G(-c_{a^*}Y+c_{a^*,\tau})},\nonumber \\
&F_n(z^*)\rightarrow (1-G(-c_{a^*}Y+c_{a^*,\tau}))\cdot \log (1-G(-c_{a^*}Y+c_{a^*,\tau})) \nonumber\\
& \mbox{           }+ G(-c_{a^*}Y+c_{a^*,\tau})\cdot \log G(-c_{a^*}Y+c_{a^*,\tau}), 
\end{align}
where $c_{a^*,\tau}=\frac{\tau}{\sqrt{1-a^*}}$.
These results together with the upper and lower bounds in \eqref{upper:bound} and \eqref{lower:bound} complete the proof.
$\hfill \qed$ \\

\section{Proof of Proposition 2}

We first restrict our analysis to the case $0<\bar{A}<1$. Lemma \ref{cdf:normal} Parts (i) and (iv) show that 
\begin{eqnarray*}
&& L_n(a;\{A_i\})\exp(-n(\bar{A}\log\bar{A}+(1-\bar{A})\log(1-\bar{A}))) \\
&=&\int_{-\infty}^{z^*}\exp(n(F_n(z)-F_n(z^*)))\cdot \gamma(z+\tau/\sqrt{a})dz+\int_{z^*}^{\infty}\exp(n(F_n(z)-F_n(z^*)))\cdot \gamma(z+\tau/\sqrt{a})dz.
\end{eqnarray*}
We focus on the first integral above for now. Since $F_n(z)$ is strictly increasing in $(-\infty, z^*)$ from Lemma \ref{cdf:normal} Part (ii), by a change of variable $F_n(z^*)-F_n(z)=\tilde{z}$ we obtain
\begin{align*}
&\int_{-\infty}^{z^*} \exp(n(F_n(z)-F_n(z^*)))\cdot \gamma(z+\tau/\sqrt{a})dz \\
=&\int_{-\infty}^{z^*-\epsilon} \exp(n(F_n(z)-F_n(z^*)))\cdot \gamma(z+\tau/\sqrt{a}) dz+\int_0^{\Delta(\epsilon)}\exp(-n\tilde{z})\cdot \gamma(z+\tau/\sqrt{a})(F'_n(z))^{-1}d\tilde{z}\\
=&J_1+J_2,
\end{align*}
where $\Delta(\epsilon)=F_n(z^*)-F_n(z^*-\epsilon)$. Lemma \ref{cdf:normal} Part (iii) says that $F''_n(\cdot)$ is Lipschitz continuous with a Lipschitz constant $\kappa_a$. We set $\epsilon=-\frac{F''_n(z^*)}{2\kappa_a}$ and analyze $J_1$ and $J_2$. It is clear that 
\begin{align*}
J_1\leq \exp(n(F_n(z^*-\epsilon)-F_n(z^*)))\cdot \int_{-\infty}^{z^*-\epsilon} \cdot \gamma(z+\tau/\sqrt{a})dz\leq \exp(n(F_n(z^*-\epsilon)-F_n(z^*))).
\end{align*}
According to the results in \eqref{convergence:results}, we can conclude that 
\begin{align}
\label{j1:bound}
J_1=O_p(n^{-1}).
\end{align}
Regarding $J_2$, denote $g(\tilde{z})= \gamma(z+\tau/\sqrt{a})(F'_n(z))^{-1}, h(\tilde{z})=g(\tilde{z})(\tilde{z})^{1/2}$. Then the identity 
\[
g(\tilde{z})=h(0)(\tilde{z})^{-1/2}+(\tilde{z})^{1/2}\int_0^1h'(t\tilde{z})dt
\]
yields that
\begin{align}\label{change:variable}
J_2&=\int_0^{\Delta(\epsilon)}\exp(-n\tilde{z})g(\tilde{z})d\tilde{z} \nonumber \\
&=\frac{h(0)}{\sqrt{n}}\int_0^{n\Delta(\epsilon)}e^{-z}z^{-1/2}dz+\int_0^1\int_{0}^{\Delta(\epsilon)}\exp(-n\tilde{z})h'(t\tilde{z})(\tilde{z})^{1/2}d\tilde{z}dt.
\end{align}
Based on the Taylor expansions 
\begin{eqnarray}\label{taylor:one}
\tilde{z}=F_n(z^*)-F_n(z)=-\frac{1}{2}F''_n(z_1)(z-z^*)^2, \quad F'_n(z)=F_n''(z_2)(z-z^*)
\end{eqnarray}
with $z_1, z_2\in [z,z^*]$, the following holds
\begin{align}
\label{hzero:value}
h(0)&=\lim_{\tilde{z}\rightarrow 0} \gamma(z+\tau/\sqrt{a}) (F'_n(z))^{-1}(\tilde{z})^{1/2} \nonumber \\
&=\lim_{z\nearrow z^*}\cdot \gamma(z+\tau/\sqrt{a}) \frac{[-\frac{1}{2}F''_n(z_1)(z-z^*)^2]^{1/2}}{F''_n(z_2)(z-z^*)}=\frac{ \gamma(z^*+\tau/\sqrt{a})}{(-2F''_n(z^*))^{1/2}}.
\end{align}
Next we bound the second term on the right-hand side of \eqref{change:variable}. With some simple calculations we obtain
\begin{eqnarray}\label{second:bound}
h'(\tilde{z})(\tilde{z})^{1/2}&=&-\gamma'(z+\tau/\sqrt{a})\cdot \frac{F_n(z^*)-F_n(z)}{(F'_n(z))^2}+ \nonumber \\
&&\gamma(z+\tau/\sqrt{a})\cdot \frac{F''_n(z)(F_n(z^*)-F_n(z))+\frac{1}{2}(F'_n(z))^2}{(F'_n(z))^3}.
\end{eqnarray}
Since $z\in [z^*-\epsilon, z^*]$ when $\tilde{z}\in [0,\Delta(\epsilon)]$, we have
\begin{align*}
|F''_n(z)-F''_n(z^*)|\leq \kappa_a |z-z^*|\leq \kappa_a \epsilon=\frac{-1}{2}F''_n(z^*),
\end{align*}
thus for any $z \in [z^*-\epsilon, z^*]$,
\begin{align}
\label{lower:bound:1} 
\frac{3}{2}F''_n(z^*)\leq F''_n(z) \leq \frac{1}{2}F''_n(z^*)
\end{align}
Then using the expansions we had about $F_n(z^*)-F_n(z)$ and $F'_n(z)$ in \eqref{taylor:one}, we can obtain
\begin{eqnarray}\label{bound:first}
\sup_{\tilde{z}\in [0,\Delta(\epsilon)]} \frac{|\gamma'(z+\tau/\sqrt{a})(F_n(z^*)-F_n(z))|}{(F'_n(z))^2}=\sup_{\tilde{z}\in [0,\Delta(\epsilon)]} \frac{ |\gamma'(z+\tau/\sqrt{a})F''_n(z_1)|}{2(F''_n(z_2))^2} \leq \frac{3L_{\gamma}}{-F''_n(z^*)},
\end{eqnarray}
where $L_{\gamma}$ denotes the upper bound for $\sup_z |\gamma'(z)|$ from Condition \textbf{A}.
 To bound the other term in \eqref{second:bound}, we need to take the Taylor expansions to a higher order,
\begin{align*}
&F_n(z^*)-F_n(z)=-\frac{1}{2}F''_n(z^*)(z-z^*)^2-\frac{1}{6}F'''_n(z_3)(z-z^*)^3, ~~z_3 \in [z,z^*],\\
&F'_n(z)=F''_n(z^*)(z-z^*)+\frac{1}{2}F'''_n(z_4)(z-z^*)^2, ~~z_4\in [z,z^*], \\ 
& F''_n(z)=F''_n(z^*)+F'''_n(z_5)(z-z^*), ~~z_5 \in [z, z^*].
\end{align*}
Plugging the above expansions into the numerator of the second term in \eqref{second:bound} and use $F'_n(z)=F_n''(z_2)(z-z^*)$ in the denominator, it is not hard to obtain
\begin{align*}
& \sup_{\tilde{z}\in [0,\Delta(\epsilon)]} \Big|\gamma(z+\tau/\sqrt{a})\cdot \frac{F''_n(z)(F_n(z^*)-F_n(z))+\frac{1}{2}(F'_n(z))^2}{(F'_n(z))^3}\Big| \\
\overset{(a)}{\leq}& \sup_{\tilde{z}\in [0,\Delta(\epsilon)]} \frac{\gamma(z+\tau/\sqrt{a})\cdot(C_1\kappa_a^2|z-z^*|+C_2\kappa_a|F''_n(z^*)|)}{|F''_n(z_2)|^3} \overset{(b)}{\leq} \frac{C_3(-L_{\gamma}F''_n(z^*)+2\kappa_a \gamma(z^*+\tau/\sqrt{a}))}{[F''_n(z^*)]^2},
\end{align*}
where $C_i~(i=1,2,3)$ are absolute positive constants. To derive $(a)$ we have used Lemma \ref{cdf:normal} Part (iii); $(b)$ is due to \eqref{lower:bound:1} and the Lipschitz continuity of $\gamma(z)$ implied by Condition \textbf{A}. 

Combining the above upper bound with \eqref{second:bound} and \eqref{bound:first} gives
\begin{eqnarray*}\label{maximum:upper}
\sup_{\tilde{z}\in [0,\Delta(\epsilon)]} |h'(\tilde{z})(\tilde{z})^{1/2}| \leq  \frac{3L_{\gamma}}{-F''_n(z^*)}+\frac{C_3(-L_{\gamma}F''_n(z^*)+2\kappa_a \gamma(z^*+\tau/\sqrt{a}))}{[F''_n(z^*)]^2}.
\end{eqnarray*}
Therefore,
\begin{eqnarray}
&&\Big|\int_0^1\int_{0}^{\Delta(\epsilon)}\exp(-n\tilde{z})h'(t\tilde{z})(\tilde{z})^{1/2}d\tilde{z}dt\Big| \nonumber \\
&\leq & \sup_{\tilde{z}\in [0,\Delta(\epsilon)]} |h'(\tilde{z})(\tilde{z})^{1/2}| \cdot \int_0^1t^{-1/2}dt \cdot \int_0^{\infty}\exp(-n\tilde{z})d\tilde{z}=O_p(n^{-1}). \label{big:o:term}
\end{eqnarray}
Putting together the results \eqref{j1:bound}, \eqref{change:variable}, \eqref{hzero:value} and \eqref{big:o:term}, we have shown that if $0<\bar{A}<1$,
\begin{eqnarray*}
&&\int_{-\infty}^{z^*}\exp(n(F_n(z)-F_n(z^*)))\cdot \gamma(z+\tau/\sqrt{a})dz \\
&=& \frac{\gamma(z^*+\tau/\sqrt{a})}{\sqrt{-2nF''_n(z^*)}} \int_0^{n\Delta(\epsilon)}e^{-z}z^{-1/2}dz  +O_p(n^{-1}),
\end{eqnarray*}
where $\epsilon=\frac{-F''_n(z^*)}{2\kappa_a}, \Delta(\epsilon)=F_n(z^*)-F_n(z^*-\epsilon)$. Using similar arguments we can obtain an analogous result for the other integral
\begin{align*}
&\int_{z^*}^{\infty}\exp(n(F_n(z)-F_n(z^*)))\cdot \gamma(z+\tau/\sqrt{a})dz \\
=&\frac{\gamma(z^*+\tau/\sqrt{a})}{\sqrt{-2nF''_n(z^*)}} \int_0^{n\Delta(-\epsilon)} e^{-z}z^{-1/2}dz  +O_p(n^{-1}).
\end{align*}
Moreover, given that $ \bar{A}\overset{a.s.}{\rightarrow} 1- G(\frac{\tau-\sqrt{a^*}Y}{\sqrt{1-a^*}}) \in (0,1)$ from \eqref{a:bar:limit}, we can conclude that
\begin{align*}
&L_n(a;\{A_i\}) \exp(-n(\bar{A}\log\bar{A}+(1-\bar{A})\log(1-\bar{A}))) \\
=&\underbrace{\frac{\gamma(z^*+\tau/\sqrt{a})}{\sqrt{-2nF''_n(z^*)}} \cdot \Big[\int_0^{n\Delta(\epsilon)}e^{-z}z^{-1/2}dz+ \int_0^{n\Delta(-\epsilon)} e^{-z}z^{-1/2}dz\Big] \cdot \mathbbm{1}_{0<\bar{A}<1}}_{:=I}+O_p(n^{-1}).
\end{align*}
The remainder of the proof is to show
\begin{align}
\label{final:piece}
I= \frac{\gamma(c_{a^*}c^{-1}_aY+c_{a,a^*,\tau})}{c_a}\cdot \sqrt{\frac{2\pi\cdot (1-G(c_{a^*,\tau,Y}))\cdot G(c_{a^*,\tau,Y})}{np^2(c_{a^*,\tau,Y})}}+O_p(n^{-1}),
\end{align}
where $c_{a,a^*,\tau}=\frac{(\sqrt{1-a^*}-\sqrt{1-a})\tau}{\sqrt{a(1-a^*)}}, c_{a^*,\tau,Y}=\frac{\tau-\sqrt{a^*}Y}{\sqrt{1-a^*}}$.
Towards that goal, define the function $H: (0,1)\rightarrow \mathbb{R}$
\[
H(x)=\frac{\sqrt{x(1-x)}\cdot \gamma(-G^{-1}(1-x)c^{-1}_a+\tau/\sqrt{a})}{p(G^{-1}(1-x))}.
\]
Then it is direct to confirm that \eqref{final:piece} is equivalent to 
\begin{align*}
O_p(n^{-1})=&\frac{\mathbbm{1}_{0<\bar{A}<1}}{c_a\sqrt{2n}}\big[H(\bar{A})-H(1-G(c_{a^*,\tau,Y}))\big]\cdot \Big[\int_0^{n\Delta(\epsilon)}e^{-z}z^{-1/2}dz+ \int_0^{n\Delta(-\epsilon)} e^{-z}z^{-1/2}dz\Big] -\\
&\frac{\mathbbm{1}_{0<\bar{A}<1}}{c_a\sqrt{2n}}H(1-G(c_{a^*,\tau,Y}))\cdot \Big[\int_{n\Delta(\epsilon)}^{\infty}e^{-z}z^{-1/2}dz+ \int_{n\Delta(-\epsilon)}^{\infty} e^{-z}z^{-1/2}dz\Big]=I_1-I_2.
\end{align*} 
We now show that the two terms $I_1$ and $I_2$ are both of order $O_p(n^{-1})$. We first have
\[
|I_1|\leq \frac{2\sqrt{\pi}\mathbbm{1}_{0<\bar{A}<1}}{c_a\sqrt{2n}}\cdot |H(\bar{A})-H(1-G(c_{a^*,\tau,Y}))|.
\]
By Hoeffding's inequality, 
\[
{\rm pr}(\sqrt{n}|\bar{A}-1+G(c_{a^*,\tau,Y})|>t)\leq 2\exp(-2t^2), \quad \forall t>0.
\]
So $\bar{A}-1+G(c_{a^*,\tau,Y})=O_p(n^{-1/2})$. Since $H(\cdot)$ is continuously differentiable, taking first-order Taylor expansion for $H(\cdot)$ leads to $I_1=O_p(n^{-1})$. Regarding $I_2$, since
\begin{align*}
\int_{n\Delta(\epsilon)}^{\infty}e^{-z}z^{-1/2}dz+ \int_{n\Delta(-\epsilon)}^{\infty} e^{-z}z^{-1/2}dz \leq \sqrt{2\pi}\big[\exp(-n\Delta(\epsilon)/2)+ \exp(-n\Delta(-\epsilon)/2)\big],
\end{align*}
and $\Delta(\pm \epsilon)$ converge to some non-degenerate random variables, it is clear that $I_2=O_p(n^{-1})$.
$\hfill \qed$

\bibliography{mc}
\bibliographystyle{ims}

\end{document}